\DeclareSymbolFont{AMSb}{U}{msb}{m}{n}
\DeclareSymbolFontAlphabet{\Bbb}{AMSb}
\newtheorem{theorem}{Theorem}[section]
\newtheorem{definition}[theorem]{Definition}
\newtheorem{lemma}[theorem]{Lemma}
\newcommand{\C}{\mathbb{C}}
\newcommand{\R}{\mathbb{R}}
\newcommand{\N}{\mathbb{N}}
\newcommand{\B}{{\cal B}}
\newcommand{\T}{\rule{0pt}{2.6ex}}       
\begin{document}

\begin{center}
{\large Cyclic dominance in a two-person Rock-Scissors-Paper game} \\
\mbox{} \\
\begin{tabular}{ccc}
L.\ Garrido-da-Silva$^{1}$ & and & S.B.S.D.\ Castro$^{1,2,*} $\\
(lilianagarridosilva@sapo.pt) & & (sdcastro@fep.up.pt) \\
OrcID: 0000-0003-4294-3931 & & OrcID: 0000-0001-9029-6893
\end{tabular}
\end{center}

\vspace{1cm}
$^*$ Corresponding author: sdcastro@fep.up.pt;
phone: +351 225 571 100; fax: +351 225 505 050. 
\medskip

$^{1}$ 
Faculdade de Economia da Universidade do Porto,
Centro de Matem\'atica da Universidade do Porto (CMUP),
Rua Dr.\ Roberto Frias,
4200-464 Porto,
Portugal. 
\medskip

$^{2}$ 
Centro de Economia e Finan\c{c}as (Cef.UP),
Rua Dr.\ Roberto Frias,
4200-464 Porto,
Portugal. 
\vspace{1cm}

\begin{abstract}
The Rock-Scissors-Paper game has been studied to account for cyclic behaviour under various game dynamics. We use a two-person parametrised version of this game. The cyclic behaviour is observed near a heteroclinic cycle, in a heteroclinic network, with two nodes such that, at each node, players alternate in winning and losing. This cycle is shown to be as stable as possible for a wide range of parameter values. The parameters are related to the players' payoff when a tie occurs. 
\end{abstract}

\vspace{1cm}

\noindent{\bf JEL codes:} C72, C73, C02

\noindent{\bf Keywords:} price setting, rock-scissors-paper game, cyclic dynamics, stability

\section{Introduction}\label{sec:intro}

The Rock-Scissors-Paper game (henceforth, RSP) has been used to model behaviour and learning in the framework of evolutionary game theory in both economics and the life sciences. This game has three candidate actions: Rock~(R), Scissors~(S) and Paper~(P) such that R~beats S, S~beats P and P~beats R. For the one-person or single-population (symmetric) case it provides a good two-dimensional model for convergent and oscillating dynamics, the type of dynamics depending on some parameters of the game. 

The RSP dynamics are commonly described in terms of the continuous-time replicator equations introduced by Taylor and Jonker~(1978). Such equations capture biologically a process of natural selection, and economically a process of learning  through imitation of successful behaviours.
Players decide on their strategies by comparing the payoff over possible outcomes to the average payoff.
A payoff higher than average may be achieved by selecting the strategy that produces the highest payoff (choice of the best reply) or by copying a strategy that was observed to be successful enough (choice of a better, not necessarily best, reply). More detail on these choices is given in Section~\ref{sec:RSPgame}.

During play the state of each player is a mixed strategy over the set of actions 
from the two-dimensional simplex -- the individual state space -- whose vertices are the pure strategies, i.e. the actions themselves R, S and P. 
The replicator dynamics for the symmetric RSP game assumes that the payoffs for each action are the same for all players in the interaction. 
If the initial conditions are symmetric this reduces the analysis to one player (or, one population), which has been done by Zeeman~(1980). A noteworthy result is that the one-person RSP game does not exhibit isolated limit cycles.\footnote{Other dynamics in the RSP game can lead to stable isolated limit cycles yielding periodic oscillatory dynamics of all actions that favor long-term coexistence. See, for instance, Gaunersd\"orfer and Hofbauer~(1995) with the extension to best response dynamics, and Mobilia~(2010) and Toupo and Strogatz~(2015) in the context of populations under mutations.} Generically, two kinds of robust long-term behaviour are predicted depending on the payoffs. The player state evolves either towards the unique mixed-strategy Nash equilibrium corresponding to coexistence of all three actions, 
or towards the heteroclinic cycle on the boundary of the two-dimensional simplex. 
A \emph{heteroclinic cycle} consists of equilibria of the dynamical system and solution trajectories connecting them. When a heteroclinic cycle is stable it induces persistent cycling characterised by progressively longer residing times in each equilibrium so that the player will sequentially play $\textnormal{R} \rightarrow \textnormal{P} \rightarrow \textnormal{S} \rightarrow \textnormal{R}$ never stopping.       

We are interested in the existence of cyclic dominance 
under coupled (asymmetric) replicator equations in a two-person RSP game. Cyclic dominance occurs when over time the available actions take turns in appearing dominant, leading to cyclicity (see Szolnoki {\em et al.} 2014).
Sato~\emph{et~al}.~(2002, 2005) provide numerical results for the two-person RSP game.
A dynamic is specified for each player and the game's state space is the product of two two-dimensional simplices.  Aguiar and Castro (2010) show that the game dynamics support a \emph{heteroclinic network} made of nine pairs of pure strategies reflecting all possible sequences of play along the state space. This heteroclinic network can be seen as consisting of 
\begin{enumerate}
\item[(a)] three heteroclinic cycles, one of which involves alternate \emph{win-loss} of both players, and the other two involve \emph{loss-tie} by only one of the players;
\item[(b)] two heteroclinic cycles in which play goes through \emph{loss-tie-win} by only one of the players.
\end{enumerate}

In this paper, we address stability of cyclic behaviour in the RSP interaction for two asymmetric players through the stability of a heteroclinic cycle. The asymmetry between players results from their different valuations of the payoff for a tie.
Although heteroclinic cycles in a heteroclinic network cannot be asymptotically stable, they can exhibit a strong form of stability, known as \emph{essential asymptotic stability}, first introduced by Melbourne~(1991). Our main results describe the stability properties of the three types of heteroclinic orbits in~(a) and~(b) as the payoffs for a tie vary. We show that the win-loss cycle wherein players switch to best responses is essentially asymptotically stable when the sum of payoffs for a tie is negative.
The choice of actions along this cycle is always made by switching to best responses by both players. Intuitively, in this case, a tie is not an attractive outcome for at least one player for whom the payoff is negative and hence, as an outcome, it is avoided. The win-loss cycle models the existence of alternating dominance between two players. 
On the other hand, the loss-tie cycles can be stable in a  weaker sense for certain payoff values, and the loss-tie-win cycles are never stable. These heteroclinic cycles are therefore harder, or impossible, to observe in applications and numerical experiments.
These illustrate sequences of play where one player switches to the best response while the other switches to a better -- not best -- response.

The dynamics of the two-person RSP game can be very complex and we do not attempt to find detailed specific applications. We also do not address the question of whether other sequences of play can be followed. The heteroclinic cycles above are all such that one player always switches to the best response while the other does one of three things: (i) switches to a best response, (ii) switches to a better -- not best -- response, or (iii) switches to a better and then to a best response (in this case, the timing of play is not alternate between players). Behaviour where the latter player does not choose just one of the alternatives (i)--(iii) is not considered. We venture to conjecture that these other alternatives do not possess any kind of stability. The reasons for this conjecture are better understood after reading Section~4, where we return to this point briefly in the final paragraph.

It is known that a wide variety of choices is allowed but not all may be realised in play: even though any sequence of actions (R, S or P) is possible, it is not certain that given a sequence of outcomes there will be players that choose actions in the way prescribed by the chosen sequence.
In the language of dynamical systems, we refer to this as \emph{infinite switching}, which is shown not to exist in this game (see Olszowiec 2016; Garrido-da-Silva 2018). We do however hope that this first approach can open the door to further research in the treatment of game dynamics for asymmetric contests between two players.

This article is organised as follows: the next section contains preliminary material which may be skipped by the reader familiar with the dynamics near heteroclinic networks. Section 3 describes the two-person RSP game and its heteroclinic cycles. Section 4 provides a thorough study of the stability of all the heteroclinic cycles in the dynamics. Detailed calculations are deferred to an appendix, as well as the necessary information to describe the trajectories of points near each heteroclinic cycle in the RSP network. The last section concludes.

\section{Definitions and preliminaries}
Consider a smooth vector field $f:\mathbb{R}^{n}\rightarrow\mathbb{R}^{n}$ described by a system of differential equations
\begin{equation}
\dot{x}=f\left(x\right), \quad x \in \R^n.
\label{eq:system}
\end{equation}
An \emph{equilibrium}\footnote{Equilibria are sometimes called fixed points or steady states, and nodes in the context of heteroclinic dynamics.} $\xi \in \R^n$ of \eqref{eq:system} satisfies $f\left(\xi\right)=0$. Given two equilibria $\xi_i$ and $\xi_j$ of \eqref{eq:system} a \emph{heteroclinic connection} $\left[\xi_i\rightarrow \xi_j\right]$  is a set of solution trajectories of~\eqref{eq:system} which are backward asymptotic to $\xi_i$ and forward asymptotic to $\xi_j$.

A {\em heteroclinic cycle} is a flow-invariant set $C\subset\mathbb{R}^{n}$ consisting of an ordered collection of finitely many saddle equilibria $\left\{ \xi_{1},\ldots,\xi_{m}\right\}$ and connecting trajectories $\left[\xi_j\rightarrow \xi_{j+1}\right]$, $j=1,\ldots,m$, where $\xi_{m+1}=\xi_{1}$.
A {\em heteroclinic network} is a connected union of finitely many heteroclinic cycles. 

We say that $f$ is $\Gamma$-equivariant for some finite Lie group $\Gamma$ acting orthogonally on $\mathbb{R}^{n}$ if $f\left(\gamma\cdot x\right)=\gamma\cdot f\left(x\right)$ for all $\gamma\in\Gamma$ and $x\in\mathbb{R}^{n}$. Each $\gamma\in\Gamma$ is called a symmetry of $f$. Background on differential equations with symmetry can be found in Golubitsky {\em et al.}~(1988). The symmetry of a problem can be used to simplify its study by identifying as one different objects that are related by symmetry, see Section~\ref{sec:RSPgame}.

The $\Gamma$-orbit of $x\in\mathbb{R}^{n}$ is the set 
$
\Gamma\left(x\right)=\left\{ \gamma\cdot x,\; \gamma\in\Gamma\right\}.
$
The elements in the $\Gamma$-orbit of an equilibrium of \eqref{eq:system} are also equilibria. A group orbit of an equilibrium is called a {\em relative equilibrium}.
A heteroclinic connection between two relative equilibria is itself a heteroclinic connection between two equilibria, one belonging to the outgoing relative equilibrium and the other belonging to the incoming one.

Consider a subset $S\subset\mathbb{R}^{n}$.  
The set of all $\Gamma$-orbits of $S$ is called the {\em quotient space} and is denoted by $S/\Gamma$.
The flow of $f$ restricts to a flow on the quotient space $S/\Gamma$ whenever $S$ is flow-invariant. By identifying each $\Gamma$-orbit of $S$ with a single point in the quotient space $S/\Gamma$, we can study the dynamics on $S$ via the dynamics on $S/\Gamma$. This reduces both the dimension of the state space and the number of equilibria (also called {\em nodes}) in a heteroclinic cycle or network.

In generic systems, heteroclinic connections between saddles can be broken by arbitrarily small perturbations.
A sufficient condition for preserving their structure relies on the existence of flow-invariant subspaces. We say that a heteroclinic cycle $C$ is {\em robust} if each heteroclinic connection $\kappa_{j,j+1}$ is contained in a flow-invariant subspace $P_j$ such that $\xi_j$ is a saddle and $\xi_{j+1}$ is a sink for the flow restricted to $P_j$, for $j=1,\ldots,m$.
Robust heteroclinic cycles arise naturally in equivariant systems (e.g. Field~1996) as well as in game theory and population dynamics (e.g. Hofbauer and Sigmund~1998).
In the latter flow-invariant subspaces occur in the form of extinction hyperplanes.

It is well known that heteroclinic cycles are visible in applications and numerical simulations if they are stable. Within a heteroclinic network a heteroclinic cycle is never asymptotically stable. In fact, a heteroclinic network consists of at least two heteroclinic cycles with at least one node in common. The unstable manifold of such a node takes trajectories to both heteroclinic cycles. Hence, there are points in a neighbourhood of the common node that do not follow any given heteroclinic cycle, precluding asymptotic stability. 
The heteroclinic cycle can however exhibit strong attraction properties, which have been classified into various types of stability, as follows.
These are illustrated in Figure~\ref{fig:stability}.

\begin{figure}[!htb]
\subfloat[$\sigma_{\textnormal{loc}}(x)=+\infty$]{\includegraphics[width=0.25\textwidth]{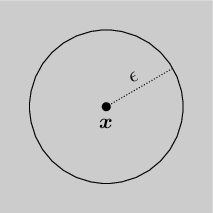}}\hfill
\subfloat[$0<\sigma_{\textnormal{loc}}(x)<+\infty$]{\includegraphics[width=0.25\textwidth]{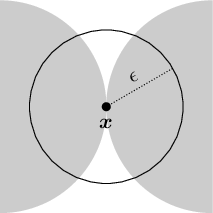}}\hfill
\subfloat[$-\infty<\sigma_{\textnormal{loc}}(x)<0$]{\includegraphics[width=0.25\textwidth]{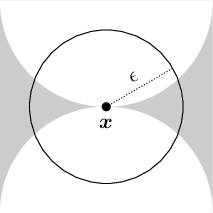}}
\caption{\label{fig:stability} The grey region represents the local basin of attraction in a neighbourhood of a point $x$ along a heteroclinic connection.
A heteroclinic cycle is asymptotically stable if for a point in each heteroclinic connection the stability index (see Definition~\ref{def:index}) $\sigma_{\textnormal{loc}}(x)=+\infty$, as in (a).
The cycle is fragmentarily asymptotically stable if there exists a heteroclinic connection along which the stability index is $-\infty < \sigma_{\textnormal{loc}}(x) < 0$ as in (c). 
The cycle is essentially asymptotically stable if one stability index is $0< \sigma_{\textnormal{loc}}(x) <+\infty$ as in~(b) and none is as in~(c).}
\end{figure}

We use terminology of Podvigina~(2012): let $S\subset\mathbb{R}^{n}$ be a compact set invariant under the flow $\Phi_{t}\left(\cdot\right)$ of the system \eqref{eq:system}, and $\epsilon,\;\delta>0$.
Given a metric $d$ on $\R^n$, we write 
$$
B_{\epsilon}\left(S\right)=\left\{ x\in \R^n: d(x,S)<\epsilon \right\}
$$
for an $\epsilon$-neighbourhood of $S$ and
$$
\B_{\delta}\left(S\right)=\left\{ x\in \mathbb{R}^n: \Phi_{t}\left(x\right)\in B_{\delta}\left(S\right) \textrm{ for any } t \geq 0
\textrm{ and } \lim_{t \rightarrow \infty}d\left(\Phi_t(x),S\right)=0 \right\}
$$
for the $\delta$-local basin of attraction of $S$.
By $\ell\left(\cdot\right)$ we denote Lebesgue measure in the appropriate dimension.

Melbourne~(1991) introduces the strongest intermediate notion of stability called essential asymptotic stability. Roughly speaking, an essentially asymptotically stable invariant object attracts all nearby trajectories except for a cuspoidal region of points sufficiently thin for making the former visible in experiments (see Figure~\ref{fig:stability}(b)). We consider the definition provided by Brannath~(1994):

\begin{definition}[Definition 1.2 in Brannath~(1994)] 
A compact invariant set $S$ is essentially asymptotically stable if there is a set $\mathcal{N}$ with $S \subset \overline{\mathcal{N}}$ ($\overline{\mathcal{N}}$ the closure of $\mathcal{N}$) such that all trajectories starting in $\mathcal{N}$ converge to $S$ without leaving a prescribed neighbourhood of $S$ and
$$
\lim_{\epsilon\rightarrow0}\frac{\ell\left(B_{\epsilon}\left(S\right)\cap \mathcal{N}\right)}{\ell\left(B_{\epsilon}\left(S\right)\right)}=1.
$$
\end{definition}

A weaker form of attractiveness is established by Podvigina~(2012) through the definition of fragmentary asymptotic stability. In this case, the set of points attracted to $S$ needs to have positive measure but can be small (see Figure~\ref{fig:stability}(c)). 
\begin{definition}[Definition 2 in Podvigina~(2012)]
A compact invariant set~$S$ is fragmentarily asymptotically stable if for any $\delta>0$
$$
\ell\left(\mathcal{B}_{\delta}\left(S\right)\right)>0.
$$
\end{definition}

Complete instability is formulated as opposed to fragmentary asymptotic stability in Podvigina~(2012). Here the probability of finding a point whose orbit remains close to the heteroclinic cycle is zero. 
\begin{definition}[Definition 3 in Podvigina (2012)]
A compact invariant set $S$ is completely unstable if there exists $\delta>0$ such that $\ell\left(\B_{\delta}\left(S\right)\right)=0$.
\end{definition}

Standard examples of completely unstable invariant sets are provided by saddle points and sources.

Podvigina and Ashwin~(2011) define an index that \emph{quantifies} the extent of the local basin of attraction of any compact invariant set $S$. 

\begin{definition}[Podvigina and Ashwin (2011)]\label{def:index}
For a compact invariant set~$S$, a point $x\in S$ and $\epsilon,\; \delta>0$, set
$$
\Sigma_{\epsilon,\delta}\left(x\right) =\frac{\ell\left(B_{\epsilon}\left(x\right)\cap\mathcal{B}_{\delta}\left(S\right)\right)}{\ell\left(B_{\epsilon}\left(x\right)\right)}.
$$
The (local) stability index of $S$ at $x$ is
$$
\sigma_{\textrm{loc}}\left(x\right)=\sigma_{\textrm{loc},+}\left(x\right)-\sigma_{\textrm{loc},-}\left(x\right),
$$
where
$$
\sigma_{\textrm{loc},-}\left(x\right) =\lim_{\delta\rightarrow0}\lim_{\epsilon\rightarrow0}\left[\frac{\ln\left(\Sigma_{\epsilon,\delta}\left(x\right)\right)}{\ln\left(\epsilon\right)}\right], \: \sigma_{\textrm{loc},+}\left(x\right)=\lim_{\delta\rightarrow0}\lim_{\epsilon\rightarrow0}\left[\frac{\ln\left(1-\Sigma_{\epsilon,\delta}\left(x\right)\right)}{\ln\left(\epsilon\right)}\right].
$$
For $\delta$ small and fixed before taking the limit, we use the convention that $\sigma_{\textrm{loc},-}\left(x\right)=+\infty$ if there is an $\epsilon_{0}>0$ such that $\Sigma_{\epsilon,\delta}\left(x\right)=0$ for all $\epsilon<\epsilon_{0}$, and $\sigma_{\textrm{loc},+}\left(x\right)=+\infty$ if $\Sigma_{\epsilon,\delta}\left(x\right)=1$ for all $\epsilon<\epsilon_{0}$. 
Therefore, $\sigma_{\textrm{loc},\pm}\left(x\right)\geq0$ and $\sigma_{\textrm{loc}}\left(x\right)\in\left[-\infty,+\infty\right]$. \end{definition}

Recall the schematic illustration in Figure~\ref{fig:stability}
where $\sigma_{\textnormal{loc}}(x)$ measures the portion of points in small enough $\epsilon$-neighbourhoods of $x \in S$ that are in the $\delta$-local basin of attraction of $S$ as the neighbourhoods shrink (grey areas in Figure~\ref{fig:stability}).

It is important to note that the stability index is constant on trajectories of the flow, see Theorem 2.2 in Podvigina and Ashwin~(2011). In particular, the stability index of a heteroclinic connection $\left[ \xi_i \rightarrow \xi_j \right]$ can be computed for an arbitrary point $x$ in $\left[ \xi_i \rightarrow \xi_j \right]$. 
We then describe attraction properties of heteroclinic cycles and networks by making use of a finite number of indices, namely the ones along their heteroclinic connections. Theorem 2.4 in Podvigina and Ashwin~(2011) allows us to reduce the dimension of the sets that need to be measured by restricting to a section transverse to the flow.

The following two results relate essential asymptotic stability and fragmentary asymptotic stability to the sign of stability indices.
Set $\ell_1\left(\cdot\right)$ for the 1-dimensional Lebesgue measure.

\begin{theorem}[Theorem 3.1 in Lohse~(2015)]\label{thm:e.a.s.}
Let $C\subset\mathbb{R}^{n}$ be a heteroclinic cycle or network with finitely many equilibria and
connecting trajectories. Suppose that $\ell_{1}\left(C\right)<\infty$ and that the stability index $\sigma_{\textrm{loc}}\left(x\right)$ exists and is not equal to zero for all $x\in C$. Then, generically, $C$ is essentially asymptotically stable if and only if $\sigma_{\textrm{loc}}\left(x\right)>0$ along all connecting trajectories.
\end{theorem}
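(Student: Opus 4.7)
The plan is to compare the global measure-theoretic condition defining e.a.s.\ with the local measure ratios $\Sigma_{\delta,\epsilon}(x)$ that define $\sigma_{\mathrm{loc}}$. Since $C$ has finitely many equilibria and connections and since the stability index is constant along each trajectory (Theorem 2.2 of \cite{Podvigina2011}), there are only finitely many index values, one per connecting orbit, and by hypothesis each is strictly positive or strictly negative. For the direction e.a.s.\ $\Rightarrow$ positivity I would argue by contraposition: if $\sigma_{\mathrm{loc}}(x_{0})<0$ for some $x_{0}$ on a connection $\kappa$, then $\sigma_{\mathrm{loc},-}(x_{0})>0$ and hence $\Sigma_{\delta,\epsilon}(x_{0})\to 0$ as $\epsilon\to 0$. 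Theorem 2.4 of \cite{Podvigina2011} transfers this estimate to a transverse section, so the complement of $\mathcal{B}_{\delta}(C)$ already takes up almost all of a small transverse disc through $x_0$. Sweeping this disc along $\kappa$ by the flow produces, inside an $\epsilon$-tube around $\kappa$, a subset of measure at least $c\,\epsilon^{n-1}$ lying outside $\mathcal{B}_{\delta}(C)$ and therefore outside $\mathcal{N}$. Since $\ell(B_{\epsilon}(C))$ also scales like $\epsilon^{n-1}$, this is a positive fraction of $B_{\epsilon}(C)$ that never lies in $\mathcal{N}$, contradicting the e.a.s.\ ratio tending to $1$.

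For the converse, fix a transverse cross-section $\Sigma_{j}$ across each connection $\kappa_{j,j+1}$ and let $y_{j}\in\Sigma_{j}\cap\kappa_{j,j+1}$. Positivity of the index on $\kappa_{j,j+1}$, combined with Theorem 2.4 of \cite{Podvigina2011}, gives on $\Sigma_{j}$ the estimate
\begin{equation*}
\ell_{n-1}\bigl((\Sigma_{j}\cap B_{\epsilon}(y_{j}))\setminus\mathcal{B}_{\delta}(C)\bigr)=o(\epsilon^{n-1}),
\end{equation*}
so the trace of the complement of the local basin on a transverse section is polynomially thinner than the section itself. Flowing $\Sigma_{j}$ along $\kappa_{j,j+1}$ for a time long enough to sweep out a tubular neighbourhood of the connection, and using that the flow map has uniformly bounded Jacobian on any compact piece bounded away from the saddles, the non-basin set inside the $\epsilon$-tube about each $\kappa_{j,j+1}$ has measure $o(\epsilon^{n-1})$. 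Summing the contributions from the finitely many connections and dividing by $\ell(B_{\epsilon}(C))\sim c\,\epsilon^{n-1}$ yields that the ratio in the definition of $\mathcal{N}$ tends to $1$, so $C$ is e.a.s.

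The main obstacle is the analysis at the equilibria, where two connections meet, the flow slows down, and the bounded-Jacobian argument used along connections breaks. One has to work in a local linearisation near each saddle and show that an $o(\epsilon^{n-1})$ basin complement on an incoming cross-section, after being stretched along the unstable directions during passage past the saddle, still has measure $o(\epsilon^{n-1})$ on the outgoing cross-section, so that the estimate of the previous paragraph can be propagated around the whole cycle. The genericity clause presumably excludes resonant eigenvalue ratios and tangencies of cross-sections with stable manifolds, under which the basin complements contributed by successive connections could align in the corner regions and aggregate into a set of measure comparable to $\ell(B_{\epsilon}(C))$ rather than remaining in the error term.
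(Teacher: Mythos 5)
The paper offers no proof of this statement to compare against: it is imported wholesale as Theorem 3.1 of \cite{Lohse2015} and used as a black box, so your attempt has to be judged on its own. Structurally your outline is sensible --- finitely many connections, constancy of the index along trajectories, a negative index forcing a non-basin set of positive relative density in an $\epsilon$-tube (incompatible with the density-one requirement on $\mathcal{N}$), positive indices giving non-basin traces of measure $o(\epsilon^{n-1})$ on transverse sections --- and you correctly identify where the difficulty sits. But what you label ``the main obstacle'' and defer with ``one has to show'' is precisely the content of the theorem, so as written this is a plan, not a proof.

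Concretely, the gap is the following. In the converse direction, your bounded-Jacobian flow-box argument controls the non-basin set only in a tube around a compact sub-arc of each connection bounded away from the equilibria. Each connection is an open trajectory limiting on two saddles; the $\epsilon$-tube around its tails still carries measure comparable to $\epsilon^{n-1}$ times the tail length, and the distortion of the flow map degenerates as the basepoint approaches a saddle. The pointwise index only gives an asymptotic density estimate at each fixed point, with no uniformity for free, so you need either a uniform-in-the-basepoint version of the estimate along the entire connection or an explicit analysis of the local passage map showing that the cuspoidal basin complement entering along the stable direction is not stretched to positive relative density by the expansion at the saddle; the latter is where resonances and degenerate eigenvector configurations enter, which is what the ``generically'' is for. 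A milder instance of the same uniformity issue appears in your forward direction: $\Sigma_{\delta,\epsilon}(x_0)\to 0$ at the single point $x_0$ only yields a non-basin set of measure $O(\epsilon^{n})$, negligible against $\ell(B_\epsilon(C))\sim c\,\epsilon^{n-1}$, so the claimed $c\,\epsilon^{n-1}$ lower bound requires propagating the estimate uniformly over a compact arc via distortion bounds for the flow conjugacy between cross-sections; that step is routine but must be stated, since the constancy of the index alone does not deliver it.
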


\begin{lemma}[Lemma 2.5 in Garrido-da-Silva and Castro~(2019)]
Suppose that for $x \in C$ the stability index $\sigma_{\textrm{loc}}\left(x\right)$ is defined. If there is a point $x\in C$ such that $\sigma_{\textrm{loc}}\left(x\right)>-\infty$, then $C$ is f.a.s.
\end{lemma}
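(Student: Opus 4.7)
The plan is to unwind the stability index back to its underlying measure-theoretic content. The goal is $\ell(\mathcal{B}_\delta(C)) > 0$ for every $\delta > 0$, while the hypothesis is a single-point statement about $\sigma_{\textrm{loc}}(x)$, so the strategy is (i) extract positivity of $\ell(B_\epsilon(x)\cap \mathcal{B}_\delta(C))$ from $\sigma_{\textrm{loc}}(x) > -\infty$ for suitably small $\delta$ and some $\epsilon$, (ii) deduce $\ell(\mathcal{B}_\delta(C)) > 0$ at those $\delta$, and (iii) propagate to every $\delta > 0$ by monotonicity.

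For step (i), since $\sigma_{\textrm{loc},+}(x) \geq 0$ always, $\sigma_{\textrm{loc}}(x) = \sigma_{\textrm{loc},+}(x) - \sigma_{\textrm{loc},-}(x) > -\infty$ forces $\sigma_{\textrm{loc},-}(x) < \infty$ (the index being defined excludes the indeterminate $\infty - \infty$; should $\sigma_{\textrm{loc},+}(x) = \infty$ occur on its own, the convention $\Sigma_{\delta,\epsilon}(x)=1$ already gives $\ell(B_\epsilon(x)\cap\mathcal{B}_\delta(C)) = \ell(B_\epsilon(x)) > 0$). Finiteness of $\sigma_{\textrm{loc},-}(x)$ as the outer limit rules out the convention that would set the inner limit to $+\infty$ for all sufficiently small $\delta$. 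Hence there is a sequence $\delta_n \searrow 0$ along which the inner limit $\lim_{\epsilon\to 0}\ln(\Sigma_{\delta_n,\epsilon}(x))/\ln(\epsilon)$ is finite, which by the stated convention means: for each $n$, no $\epsilon_0 > 0$ exists with $\Sigma_{\delta_n,\epsilon}(x) = 0$ throughout $(0,\epsilon_0)$. Therefore arbitrarily small $\epsilon > 0$ satisfy $\ell(B_\epsilon(x)\cap \mathcal{B}_{\delta_n}(C)) > 0$, and a fortiori $\ell(\mathcal{B}_{\delta_n}(C)) > 0$.

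Step (iii) is immediate monotonicity: since $\delta' \leq \delta$ yields $B_{\delta'}(C) \subseteq B_\delta(C)$ and hence $\mathcal{B}_{\delta'}(C) \subseteq \mathcal{B}_\delta(C)$, for any prescribed $\delta > 0$ one picks $n$ with $\delta_n < \delta$ and concludes $\ell(\mathcal{B}_\delta(C)) \geq \ell(\mathcal{B}_{\delta_n}(C)) > 0$, which is exactly the definition of f.a.s.

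The main obstacle, modest as it is, is the logical bookkeeping in step (i): $\sigma_{\textrm{loc},-}(x)$ is an iterated limit with a non-trivial convention grafted onto the inner step, so one must carefully justify that a finite \emph{outer} limit genuinely produces some (indeed arbitrarily small) $\delta$ at which the \emph{inner} limit is finite, and that a finite inner limit produces genuine $\epsilon$ with $\Sigma_{\delta,\epsilon}(x) > 0$ rather than only a weaker asymptotic statement. Once that extraction is laid out cleanly, the remainder is a one-line appeal to the monotonicity of $\mathcal{B}_\delta(C)$ in $\delta$ and requires no estimate beyond positivity.
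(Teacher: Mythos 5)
Your proof is correct and follows essentially the same route as the source: the paper only quotes this lemma from \cite{GdSC}, and the argument there is precisely this unwinding of the definitions --- $\sigma_{\textrm{loc}}(x)>-\infty$ forces $\sigma_{\textrm{loc},-}(x)<\infty$, the convention then yields $\Sigma_{\delta,\epsilon}(x)>0$ for arbitrarily small $\delta$ and suitable $\epsilon$, hence $\ell\left(\mathcal{B}_{\delta}(C)\right)>0$, and monotonicity of $\delta\mapsto\mathcal{B}_{\delta}(C)$ extends this to all $\delta>0$. Your careful treatment of the iterated limit and of the degenerate case $\sigma_{\textrm{loc},+}(x)=\infty$ is exactly the bookkeeping the statement requires.
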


In what follows we drop the subscript $loc$ for ease of notation.

\section{The Rock-Scissors-Paper game}\label{sec:RSPgame}
We examine the long-term dynamics for the two-person Rock-Scissors-Paper (RSP) game. Each player has three possible actions R (rock), S (scissors) and P (paper) engaging in a cyclic relation: R beats S, S beats P, P beats~R.
Our description of the RSP interaction is based on Sato {\em et al.}~(2002, 2005) and Aguiar and Castro~(2010): 
two players, say $X$ and $Y$, simultaneously choose one action from $\left\{\textnormal{R}, \textnormal{S}, \textnormal{P}\right\}$. The payoff of the winning action is $+1$ while the payoff of the losing action is $-1$. 
If a tie occurs with both players choosing the same action, the respective payoffs are parametrised by quantities $\varepsilon_{x},\varepsilon_{y} \in \left(-1,1\right)$. We will assume $\varepsilon_{x}+\varepsilon_{y} \neq 0$ so that the game is not zero-sum.
The normal form representation of the game is given by two normalised payoff matrices 
$$
A=\left(\begin{array}{ccc}
0 & 1-\varepsilon_{x} & -1-\varepsilon_{x}\\
-1-\varepsilon_{x} & 0 & 1-\varepsilon_{x}\\
1-\varepsilon_{x} & -1-\varepsilon_{x} & 0
\end{array}\right), \; B=\left(\begin{array}{ccc}
0 & 1-\varepsilon_{y} & -1-\varepsilon_{y}\\
-1-\varepsilon_{y} & 0 & 1-\varepsilon_{y}\\
1-\varepsilon_{y} & -1-\varepsilon_{y} & 0
\end{array}\right),
$$
whose columns and rows respect the order of the actions: R, S, P. Each element of the matrix $A$ (resp.~$B$) is the payoff of the row player~$X$ (resp.~$Y$) playing against the column player~$Y$ (resp.~$X$). 

Within the evolution approach the players' choices are expressed in the form of state (column) vectors whose components are the probabilities of playing R, S and P.
At time $t$, these are $\textrm{\textbf{x}}=\left(x_{1},x_{2},x_{3}\right)\in \varDelta_X$ for player~$X$ and $\textrm{\textbf{y}}=\left(y_{1},y_{2},y_{3}\right)\in \varDelta_Y$ for player~$Y$, where $\varDelta_X$ and $\varDelta_Y$ denote the unit simplex in $\mathbb{R}^{3}$ associated to each player. 
The vectors $\textrm{\textbf{x}}$ and $\textrm{\textbf{y}}$ represent mixed strategies in game play.

The three vertices of  $\varDelta_X$ and $\varDelta_Y$ correspond to pure strategies for which the player assigns a probability of $1$ to each action. We then refer to those only as $\textnormal{R}=\left(1,0,0\right)$, $\textnormal{S}=\left(0,1,0\right)$ and $\textnormal{P}=\left(0,0,1\right)$.

The pair $\left(\textrm{\textbf{x}},\textrm{\textbf{y}}\right) \in \varDelta=\varDelta_{X}\times\varDelta_{Y}$ describes the state of the game at a particular time, $\varDelta$ being a four-dimensional subset of $\R^6$.
The dynamics of play evolves according to the reinforcement learning governed by the coupled replicator equations\footnote{The superscript $T$ indicates the transpose of a matrix in general.} 
\begin{equation}
\begin{aligned}\frac{dx_i}{dt} & =x_{i}\left[\left(A\textrm{\textbf{y}}\right)_{i}-\textrm{\textbf{x}}^{T}A\textrm{\textbf{y}}\right], &  & i=1,2,3,\\
\frac{dy_j}{dt} & =y_{j}\left[\left(B\textrm{\textbf{x}}\right)_{j}-\textrm{\textbf{y}}^{T}B\textrm{\textbf{x}}\right], &  & j=1,2,3,
\end{aligned}
\label{eq:replicator}
\end{equation}
where $\left(A\textrm{\textbf{y}}\right)_{i}$ and $\left(B\textrm{\textbf{y}}\right)_{j}$ are, respectively, the $i$th and $j$th element of the vectors $A\textrm{\textbf{y}}$ and $B\textrm{\textbf{x}}$. The products $\textrm{\textbf{x}}^{T}A\textrm{\textbf{y}}$ and $\textrm{\textbf{y}}^{T}B\textrm{\textbf{x}}$ represent the average payoff for players $X$ and $Y$, respectively.

The unique Nash equilibrium is
$\left(\textrm{\textbf{x}}^{*},\textrm{\textbf{y}}^{*}\right)=\left(\frac{1}{3},\frac{1}{3},\frac{1}{3},\frac{1}{3},\frac{1}{3},\frac{1}{3}\right)$ at which both players are indifferent among all three actions. This is also an equilibrium of the dynamics.
As stated in Sato \emph{et al.}~(2005), when the game is zero-sum, i.e. $\varepsilon_{x}+\varepsilon_{y}=0$, the Jacobian at the Nash equilibrium has purely imaginary eigenvalues and $\left(\textrm{\textbf{x}}^{*},\textrm{\textbf{y}}^{*}\right)$ is non-hyperbolic. 
The collective has then a constant of motion; see Equation (44) in
Sato \emph{et al.}~(2005) who provide interesting results for the zero-sum case in their Section 4.3.1.
Otherwise, if $\varepsilon_{x}+\varepsilon_{y} \neq 0$, then $\left(\textrm{\textbf{x}}^{*},\textrm{\textbf{y}}^{*}\right)$ is a saddle and trajectories may be attracted to a (robust) heteroclinic network on the boundary of~$\varDelta$. We focus on this latter case.

There are nine additional equilibria of \eqref{eq:replicator} corresponding to the vertices of~$\varDelta$, namely $\left(\textrm{\textbf{x}},\textrm{\textbf{y}}\right)$ with $\textrm{\textbf{x}},\textrm{\textbf{y}}\in\left\{\textnormal{R}, \textnormal{S}, \textnormal{P}\right\}$.  
All these equilibria are in turn saddle points.
The nine vertices together with the edges of $\varDelta$ form a heteroclinic network. See also Figure 11 in Sato {\em et al.} (2005). This heteroclinic network can be described both as the union of three 6-node heteroclinic cycles, $C_{0}$, $C_{1}$ and $C_{2}$, with
\[
\begin{aligned}C_{0} & =\left[\left(\textrm{R},\textrm{P}\right)\rightarrow\left(\textrm{S},\textrm{P}\right)\rightarrow\left(\textrm{S},\textrm{R}\right)\rightarrow\left(\textrm{P},\textrm{R}\right)\rightarrow\left(\textrm{P},\textrm{S}\right)\rightarrow\left(\textrm{R},\textrm{S}\right)\rightarrow\left(\textrm{R},\textrm{P}\right)\right]\\
C_{1} & =\left[\left(\textrm{R},\textrm{S}\right)\rightarrow\left(\textrm{R},\textrm{R}\right)\rightarrow\left(\textrm{P},\textrm{R}\right)\rightarrow\left(\textrm{P},\textrm{P}\right)\rightarrow\left(\textrm{S},\textrm{P}\right)\rightarrow\left(\textrm{S},\textrm{S}\right)\rightarrow\left(\textrm{R},\textrm{S}\right)\right]\\
C_{2} & =\left[\left(\textrm{S},\textrm{R}\right)\rightarrow\left(\textrm{R},\textrm{R}\right)\rightarrow\left(\textrm{R},\textrm{P}\right)\rightarrow\left(\textrm{P},\textrm{P}\right)\rightarrow\left(\textrm{P},\textrm{S}\right)\rightarrow\left(\textrm{S},\textrm{S}\right)\rightarrow\left(\textrm{S},\textrm{R}\right)\right],
\end{aligned}
\]
and as the union of two 9-node heteroclinic cycles, $C_{3}$ and $C_{4}$, with
\[
\begin{aligned}C_{3}= & \left[\left(\textrm{R},\textrm{S}\right)\rightarrow\left(\textrm{R},\textrm{R}\right)\rightarrow\left(\textrm{R},\textrm{P}\right)\rightarrow\left(\textrm{S},\textrm{P}\right)\rightarrow\left(\textrm{S},\textrm{S}\right)\rightarrow\left(\textrm{S},\textrm{R}\right)\rightarrow\left(\textrm{P},\textrm{R}\right)\rightarrow\right.\\
 & \left.\rightarrow\left(\textrm{P},\textrm{P}\right)\rightarrow\left(\textrm{P},\textrm{S}\right)\rightarrow\left(\textrm{R},\textrm{S}\right)\right]\\
C_{4}= & \left[\left(\textrm{S},\textrm{R}\right)\rightarrow\left(\textrm{R},\textrm{R}\right)\rightarrow\left(\textrm{P},\textrm{R}\right)\rightarrow\left(\textrm{P},\textrm{S}\right)\rightarrow\left(\textrm{S},\textrm{S}\right)\rightarrow\left(\textrm{R},\textrm{S}\right)\rightarrow\left(\textrm{R},\textrm{P}\right)\rightarrow\right.\\
 & \left.\rightarrow\left(\textrm{P},\textrm{P}\right)\rightarrow\left(\textrm{S},\textrm{P}\right)\rightarrow\left(\textrm{S},\textrm{R}\right)\right].
\end{aligned}
\]

The two alternative descriptions above allow us to identify three types of heteroclinic orbits for RSP games.
When following 
\begin{itemize}
\item the win-loss cycle, $C_0$, both players switch to best responses;
\item the loss-tie cycles, $C_1$ and $C_2$, one player switches to best response, and the other switches to better, but not best, responses mimicking the first player's strategy and resulting in a tie;
\item the loss-tie-win cycles, $C_3$ and $C_4$, one player switches to best responses, and the other switches in two steps, first to the better response and then to the best response. Players do not make their choices alternately: along $C_3$ player $X$ waits until player $Y$ has moved twice before considering a new choice of action (along $C_4$ it is player $Y$ who waits and player $X$ who makes two consecutive choices). 
\end{itemize}

The theoretic behaviour of this game was briefly investigated by Sato \emph{et al.}~(2002) via numerical simulations. Aguiar and Castro~(2010) addressed the same problem by making use of equivariant theory. 

The vector field associated to \eqref{eq:replicator} is equivariant under the action of the symmetry group $\Gamma$ generated by
$$
\gamma:\left(x_{1},x_{2},x_{3},y_{1},y_{2},y_{3}\right)\mapsto\left(x_{3},x_{1},x_{2},y_{3},y_{1},y_{2}\right).
$$
The $\Gamma$-orbits of the equilibria $\left(\textrm{R},\textrm{P}\right)$, $\left(\textrm{R},\textrm{S}\right)$ and $\left(\textrm{R},\textrm{R}\right)$ are, respectively, the following relative equilibria:
\[
\begin{aligned}\xi_{0} & \equiv\Gamma\left(\textrm{R},\textrm{P}\right)=\left\{ \left(\textrm{R},\textrm{P}\right),\left(\textrm{S},\textrm{R}\right),\left(\textrm{P},\textrm{S}\right)\right\} \\
\xi_{1} & \equiv\Gamma\left(\textrm{R},\textrm{S}\right)=\left\{ \left(\textrm{R},\textrm{S}\right),\left(\textrm{S},\textrm{P}\right),\left(\textrm{P},\textrm{R}\right)\right\} \\
\xi_{2} & \equiv\Gamma\left(\textrm{R},\textrm{R}\right)=\left\{ \left(\textrm{R},\textrm{R}\right),\left(\textrm{S},\textrm{S}\right),\left(\textrm{P},\textrm{P}\right)\right\} .
\end{aligned}
\]
These represent the three possible outcomes in the game: win, loss and tie. 
For example, player $X$ loses at $\xi_{0}$, wins at $\xi_1$ and ties at $\xi_2$. 

Due to symmetry, the dynamics of \eqref{eq:replicator} on $\R^6$ can be studied from the dynamics on the quotient space $\R^6/\Gamma$. The dynamics on $\R^6$ can be then recovered from the quotient dynamics in the usual way.
The restricted flow to the quotient  space $\R^6/\Gamma$ contains the quotient heteroclinic network with one-dimensional heteroclinic connections between two of the relative equilibria: $\xi_0$, $\xi_1$ and $\xi_2$. The quotient heteroclinic cycles are described as (see Figure \ref{fig:cycles-in-network})
\[
\begin{alignedat}{2}
\begin{aligned}C_{0} & =\left[\xi_{0}\rightarrow\xi_{1}\rightarrow\xi_{0}\right]\\
C_{1} & =\left[\xi_{1}\rightarrow\xi_{2}\rightarrow\xi_{1}\right]\\
C_{2} & =\left[\xi_{0}\rightarrow\xi_{2}\rightarrow\xi_{0}\right]
\end{aligned}
 & \quad \textrm{ and } \quad & 
 \begin{aligned}C_{3} & =\left[\xi_{0}\rightarrow\xi_{1}\rightarrow\xi_{2}\rightarrow\xi_{0}\right]\\
C_{4} & =\left[\xi_{0}\rightarrow\xi_{2}\rightarrow\xi_{1}\rightarrow\xi_{0}\right].
\end{aligned}
\end{alignedat}
\]

\begin{figure}[!htb]
\centering{}
\subfloat[$C_{0}\cup C_{1}\cup C_{2}$]{\begin{centering}\includegraphics{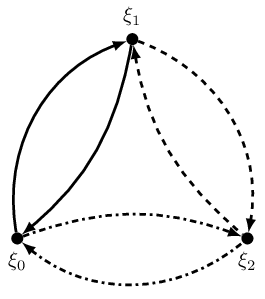}\par\end{centering}} \hspace{2cm}
\subfloat[$C_{3}\cup C_{4}$]{\begin{centering}\includegraphics{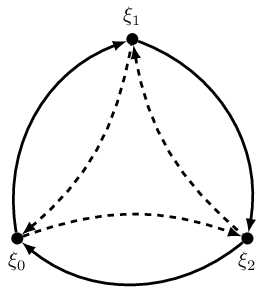}\par\end{centering}}
\caption{\label{fig:cycles-in-network}Cycles in the quotient heteroclinic network. Each style identifies a quotient heteroclinic cycle: (a)
$C_{0}$ is represented by a solid line, $C_{1}$ by a dashed line, and $C_{2}$ by a dash-dot line; (b) $C_{3}$ is represented by a solid line, and $C_{4}$ by a dashed line.}
\end{figure}

Notice that the coordinate hyperplanes as well as all sub-simplices of~$\varDelta$ are flow-invariant subspaces.
In particular, every heteroclinic connection $\left[\xi_i \rightarrow \xi_j \right]$, $i \neq j=0,1,2$, is of saddle-sink type in a two-dimensional boundary of $\varDelta$. Denote this subspace by $P_{ij}$. Evidently, $P_{ij}$ is not a vector subspace of $\R^6$.
For convenience we find a three-dimensional vector subspace of $\R^6 $, labelled $Q_{ij}$, also invariant under the flow such that $P_{ij}\subset Q_{ij}$ and $\left[\xi_i \rightarrow \xi_j \right]$ persists in a robust way. Representatives of all heteroclinic connections in the quotient heteroclinic network and the respective flow-invariant subspaces that contain them are listed in Table \ref{tab:representative}.

\begin{table}
\begin{centering}
\begin{tabular}{|c|c|c|c|}
\hline 
Connection & Representative & 2-dim space $P_{ij}$ & 3-dim vector space $Q_{ij}$\tabularnewline
\hline 
\hline 
$\left[\xi_{0}\rightarrow\xi_{1}\right]$ & $\left[\left(\textrm{R},\textrm{P}\right)\rightarrow\left(\textrm{S},\textrm{P}\right)\right]$ & $\left\{ \left(x_{1},x_{2},0;0,0,1\right)\right\} $ & $\left\{ \left(x_{1},x_{2},0;0,0,y_{3}\right)\right\} $\tabularnewline
\hline 
$\left[\xi_{1}\rightarrow\xi_{0}\right]$ & $\left[\left(\textrm{S},\textrm{P}\right)\rightarrow\left(\textrm{S},\textrm{R}\right)\right]$ & $\left\{ \left(0,1,0;y_{1},0,y_{3}\right)\right\} $ & $\left\{ \left(0,x_{2},0;y_{1},0,y_{3}\right)\right\} $\tabularnewline
\hline 
$\left[\xi_{1}\rightarrow\xi_{2}\right]$ & $\left[\left(\textrm{R},\textrm{S}\right)\rightarrow\left(\textrm{R},\textrm{R}\right)\right]$ & $\left\{ \left(1,0,0;y_{1},y_{2},0\right)\right\} $ & $\left\{ \left(x_{1},0,0;y_{1},y_{2},0\right)\right\} $\tabularnewline
\hline 
$\left[\xi_{2}\rightarrow\xi_{1}\right]$ & $\left[\left(\textrm{R},\textrm{R}\right)\rightarrow\left(\textrm{P},\textrm{R}\right)\right]$ & $\left\{ \left(x_{1},0,x_{3};1,0,0\right)\right\} $ & $\left\{ \left(x_{1},0,x_{3};y_{1},0,0\right)\right\} $\tabularnewline
\hline 
$\left[\xi_{0}\rightarrow\xi_{2}\right]$ & $\left[\left(\textrm{S},\textrm{R}\right)\rightarrow\left(\textrm{R},\textrm{R}\right)\right]$ & $\left\{ \left(x_{1},x_{2},0;1,0,0\right)\right\} $ & $\left\{ \left(x_{1},x_{2},0;y_{1},0,0\right)\right\} $\tabularnewline
\hline 
$\left[\xi_{2}\rightarrow\xi_{0}\right]$ & $\left[\left(\textrm{R},\textrm{R}\right)\rightarrow\left(\textrm{R},\textrm{P}\right)\right]$ & $\left\{ \left(1,0,0;y_{1},0,y_{3}\right)\right\} $ & $\left\{ \left(x_{1},0,0;y_{1},0,y_{3}\right)\right\} $\tabularnewline
\hline 
\end{tabular}
\par\end{centering}

\caption{\label{tab:representative}Flow-invariant subspaces and representatives for heteroclinic connections in the quotient heteroclinic network.}
\end{table}

\section{Stability of the RSP cycles}\label{sec:stability}
In this section we establish the overall stability properties of the (quotient) heteroclinic cycles $C_{p}$, $p=0,1,2,3,4$, by looking at the stability of the individual heteroclinic connections. 
The behaviour of trajectories passing close to each heteroclinic cycle is captured by \emph{Poincar\'e maps} defined on suitable cross sections to the heteroclinic connections. We construct as many Poincar\'e maps around the heteroclinic cycle as the number of its heteroclinic connections. 
They characterise accordingly the local basin of attraction of the heteroclinic cycle in a neighbourhood of each heteroclinic connection, see~\eqref{eq:basin-C0} of Appendix~\ref{sec:basin}.

Due to an appropriate change of coordinates our Poincar\'e maps can be described by a product of matrices called \emph{basic transition matrices}. Each of these matrices is related to the dynamics along one heteroclinic connection. 
All details can be found in Appendix~\ref{sec:transitions}. 

We compute the stability indices along all heteroclinic connections making up of every $C_p$-cycle by means of the approach of Garrido-da-Silva and Castro~(2019).
Their results deal with the calculation of stability indices for heteroclinic cycles comprised of one-dimensional heteroclinic connections lying in flow-invariant spaces of equal dimension. 
We observe that the $C_{p}$-cycles satisfy this assumption for any $p$.

The main tool in our analysis is the function $F^{\textrm{index}}:\R^{3} \rightarrow\left[-\infty,\infty\right]$, 
given in Definition~3.8 in Garrido-da-Silva and Castro~(2019), which can be related to the stability index in Definition~\ref{def:index} as follows: suppose that the intersection of the local basin of attraction of a compact invariant set $S$ with a cross section transverse to the flow at $x\in S$ is given by
$$
\left\{\left(x_1,x_2,x_3\right) \in \R^3: \max\left\{|x_1|,|x_2|,|x_3|\right\}<\delta \textnormal{ and }\left|x_1^{\alpha_1}x_2^{\alpha_2}x_3^{\alpha_3}\right|<1
\right\}
$$ 
for $\boldsymbol{\alpha}=\left(\alpha_1,\alpha_2,\alpha_3\right)\in \R^3$ and $\delta>0$ small. The function $\boldsymbol{\alpha}\mapsto F^{\textnormal{index}}\left(\boldsymbol{\alpha}\right)$ is the local stability index for $S$ at $x$ relative to this intersection, i.e. $F^{\textnormal{index}}\left(\boldsymbol{\alpha}\right)=\sigma_{\textnormal{loc}}(x)$.
See Appendix~\ref{sec:Findex} for the explicit form of $F^{\textrm{index}}\left(\boldsymbol{\alpha}\right)$.

For a heteroclinic cycle we denote by $\sigma_{j}$ the local stability index along the heteroclinic connection leading to the node $\xi_{j}$. For ease of reference we reproduce a result from Garrido-da-Silva and Castro (2019), which determines the stability indices for heteroclinic cycles such as those in the RSP game. 
The statement refers to Lemma \ref{lem:cond(i)-(iii)} that can be found in Appendix~\ref{sec:basin}.

\begin{theorem}[Theorem 3.10 in Garrido-da-Silva and Castro (2019)]\label{thm:stab-indices}
Let $M_{j}$, $j=1,\ldots,m$, be  basic transition matrices of a collection of maps
associated with a heteroclinic cycle. Denote by $q=j_{1},...,j_{L}$, $L\geq1$, all the indices for which $M_q$ has at least one negative entry.
\begin{enumerate}                                                                                                                                                                                                 
\item[(a)] If, for at least one $j$, the matrix $M^{\left(j\right)} = M_{j-1} \cdots M_1 M_m \cdots M_j$
does not satisfy conditions (i)-(iii) of Lemma \ref{lem:cond(i)-(iii)},
then $\sigma_{j}=-\infty$ for all $j=1,\ldots,m$ and the heteroclinic cycle is
not an attractor (it is completely unstable).
\item[(b)] If the matrices $M^{\left(j\right)}$ satisfies conditions (i)-(iii)
of Lemma \ref{lem:cond(i)-(iii)} for all $j=j_p+1$, $p=1,\ldots,L$, such that $j_p+1\notin \left\{ j_1,\ldots,j_L \right\}$,
then the heteroclinic cycle is f.a.s. Furthermore, for each $j=1,\ldots,m$, there exist vectors $\boldsymbol{\beta}_{1},\boldsymbol{\beta}_{2},\ldots,\boldsymbol{\beta}_{K}\in\R^N$,
such that 
\[
\sigma_{j}=\min_{i=1,\ldots,K}\left\{ F^{\textnormal{index}}\left(\boldsymbol{\beta}_{i}\right)\right\} .
\]
\end{enumerate}
\end{theorem}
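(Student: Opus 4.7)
The plan is to derive the stability indices from the Poincar\'e return map of the cycle, written in suitable logarithmic coordinates. First I would fix a node $\xi_j$ and a local cross-section $H_j$ transverse to the connection entering $\xi_j$, then compose the basic transition matrices $M_k$ to obtain the one-turn return map $M^{(j)}=M_{j-1}\cdots M_{1}M_{m}\cdots M_{j}$ which, up to a global sign-permutation factor, acts linearly on the logarithms of the coordinates normal to the cycle. A cyclic conjugation relates $M^{(j)}$ to $M^{(j+1)}$, so admissibility properties at one node transfer to the others with shifts prescribed by the cycle's structure.

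For part (a), if some $M^{(j)}$ fails one of the conditions (i)-(iii) of the cited lemma, then the log-coordinate iteration has no invariant positive cone large enough to trap trajectories near the cycle: the iterates of any initial condition on $H_j$ are eventually forced out of $B_{\delta}(C)$ along a direction dictated by the failed condition. Hence $\B_{\delta}(C)\cap H_j$ has vanishing density at the intersection point of the connection with $H_j$, so $\sigma_j=-\infty$ there. Invoking Theorem 2.2 of \cite{Podvigina2011} and the cyclic conjugation then propagates the failure to every node, and the cycle cannot attract a positive-measure set.

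For part (b), the hypothesis ensures that wherever an $M_q$ contributes negative entries, the immediately subsequent matrix restores an admissible positive cone. I would enumerate the finite set of admissible sign itineraries through the successive cross-sections and show that each itinerary $i$ contributes to the local basin on $H_j$ a cusp-shaped sector of the form $\{(x_1,x_2,x_3)\colon |x_1^{\beta_{i,1}}x_2^{\beta_{i,2}}x_3^{\beta_{i,3}}|<1\}$, with exponents $\boldsymbol{\beta}_i\in\R^N$ read off from the product of matrices along that itinerary. Since the basin on $H_j$ is then a finite union of such cusps, and the stability index of a finite union of cusps equals the minimum of the individual indices (because $F^{\textnormal{index}}$ records the rate at which the cusp pinches to the origin), one obtains $\sigma_j=\min_{i}F^{\textnormal{index}}(\boldsymbol{\beta}_i)$. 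Since at least one $\sigma_j$ is then finite, the lemma preceding the theorem statement yields $\ell(\B_{\delta}(C))>0$ for all $\delta>0$, so $C$ is f.a.s.

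The main obstacle is the combinatorial content of part (b): correctly enumerating the admissible sign itineraries, verifying that each produces exactly one exponent vector $\boldsymbol{\beta}_i$, and checking that the union of the cusps is \emph{precisely} the local basin on $H_j$ -- neither missing sectors nor picking up spurious ones. Because a negative entry in $M_q$ effectively reflects a cross-section octant, the restoration of admissibility at $M_{q+1}$ must be matched reflection-by-reflection along the cycle, and this sign-tracking carries the technical weight of the theorem. Once the list of surviving vectors $\boldsymbol{\beta}_i$ is in hand, the identity $\sigma_j=\min_i F^{\textnormal{index}}(\boldsymbol{\beta}_i)$ and the f.a.s.\ conclusion follow by direct application of the definitions and Lemma 2.5 of \cite{GdSC}.
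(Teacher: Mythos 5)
First, a framing remark: the paper does not prove this statement --- it is Theorem~3.10 of \cite{GdSC}, reproduced verbatim for later use --- so there is no in-paper proof to compare against, and I can only measure your reconstruction against the machinery the paper does set up (the basin description \eqref{eq:basin-C0}, Lemma~\ref{lem:cond(i)-(iii)}, and the worked computation \eqref{eq:stab-indices}). Your part (a) is right in outline but overstated: failure of (i)--(iii) does not force \emph{every} trajectory off the cycle; what the lemma gives is $\ell\left(U^{-\infty}\left(M^{(j)}\right)\right)=0$, i.e.\ the set of $\boldsymbol{\eta}$ with $\left(M^{(j)}\right)^{k}\boldsymbol{\eta}\rightarrow-\boldsymbol{\infty}$ is null, whence the local basin on the cross section has zero measure and $\sigma_j=-\infty$ by the convention in the definition of the index; propagation to the other nodes is via the flow-induced diffeomorphisms between cross sections, which preserve null sets.

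The genuine gap is in part (b). The local basin on $H_j$ is an \emph{intersection}, not a union, of cusp-shaped sets: as \eqref{eq:basin-C0} makes explicit, a point lies in the basin only if \emph{all} iterates $\left(M^{(j)}\right)^{k}\boldsymbol{\eta}$ and all intermediate images $M_{q}\left(M^{(j)}\right)^{k}\boldsymbol{\eta}$ remain in the negative octant, and each such linear inequality is one cusp condition $\left|x_1^{\beta_1}x_2^{\beta_2}x_3^{\beta_3}\right|<1$. Your claim that the index of a finite \emph{union} of cusps is the minimum of the individual indices is false: for a union the complement is the intersection of the complements, so the union's index is governed by the \emph{largest} $F^{\textnormal{index}}$; the minimum formula is precisely what an intersection yields, because the thickest excluded complement dominates the measure of $B_{\epsilon}\setminus\mathcal{N}$. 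Relatedly, the reduction from countably many constraints (one family per $k$) to the finite list $\boldsymbol{\beta}_1,\ldots,\boldsymbol{\beta}_K$ is not an enumeration of sign itineraries: it follows from conditions (i)--(iii) themselves, which force $\left(M^{(j)}\right)^{k}\boldsymbol{\eta}$ to align with the dominant eigendirection, so that all constraints for large $k$ collapse into the single limiting constraint attached to the eigenvector datum (the term $F^{\textnormal{index}}\left(\boldsymbol{v}^{\max,0}\right)$ in \eqref{eq:stab-indices}), leaving only the rows of $M_q$ and of $M^{(j)}$ from the first iterates. Without the intersection structure and this collapse argument the identity $\sigma_{j}=\min_{i}F^{\textnormal{index}}\left(\boldsymbol{\beta}_{i}\right)$ does not follow. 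The final step --- a finite index at one point gives $\ell\left(\B_{\delta}\right)>0$ and hence f.a.s.\ via Lemma~2.5 of \cite{GdSC} --- is fine.
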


Applying Theorem \ref{thm:stab-indices} to each $C_p$-cycle gives the stability indices of its heteroclinic connections, where the vectors $\boldsymbol{\beta}_{i}$ are the rows with negative entries of the associated basic transition matrices and their product. These negative entries occur due to the existence of two unstable directions at a node: at $\xi_0$, for instance, there is one unstable direction towards~$\xi_1$ and another towards~$\xi_2$.

Recall that the quantity $\varepsilon_{x}+\varepsilon_{y}$ is that which determines whether the game is zero-sum or not. Since we are focussing on the non-zero-sum game, this quantity is always non-zero.
\begin{theorem}\label{thm:stab-C0}
For the $C_{0}$-cycle of the RSP game and any $\varepsilon_x,\varepsilon_y\in\left(-1,1\right)$,
\begin{enumerate}
\item[(i)] if $\varepsilon_{x}+\varepsilon_{y}>0$, then all stability
indices are~$-\infty$, and the cycle is completely unstable;
\item[(ii)] if $\varepsilon_{x}+\varepsilon_{y}<0$, then the stability
indices are
\[
\begin{aligned}
\sigma_{0}=& \min\left\{\frac{1-\varepsilon_{x}}{1+\varepsilon_{x}},\dfrac{\left(1-\varepsilon_y\right)^2}{2\left(1+\varepsilon_y\right)}\right\}>0 \\
\sigma_{1}=& \min \left\{\frac{1-\varepsilon_{y}}{1+\varepsilon_{y}},\dfrac{\left(1-\varepsilon_x\right)^2}{2\left(1+\varepsilon_x\right)}\right\}>0,
\end{aligned}
\]
and the cycle is essentially asymptotically stable.
\end{enumerate}
\end{theorem}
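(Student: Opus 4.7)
The plan is to apply Theorem~\ref{thm:stab-indices} to the $C_0$-cycle, for which I need the basic transition matrices $M_0,M_1$ of the Poincar\'e maps associated with the two connections $[\xi_0\to\xi_1]$ and $[\xi_1\to\xi_0]$. First I would linearise the replicator system~\eqref{eq:replicator} at the representatives $(\textrm{R},\textrm{P})$ and $(\textrm{S},\textrm{P})$ of $\xi_0$ and $\xi_1$, restricted to the three-dimensional invariant subspaces $Q_{01}$ and $Q_{10}$ listed in Table~\ref{tab:representative}. Direct computation with the payoff matrices $A,B$ yields, at each $\xi_j$, a contracting eigenvalue $-c_j$, an expanding eigenvalue $e_j$, and a transverse eigenvalue $t_j$, all affine in $\varepsilon_x,\varepsilon_y$; crucially, the sign of $t_j$ is governed by the sign of $\varepsilon_x+\varepsilon_y$, which explains the dichotomy in the statement. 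Assembling these eigenvalues into the two $3\times 3$ basic transition matrices then reduces the problem to the two cases of Theorem~\ref{thm:stab-indices}.

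For part~(i), when $\varepsilon_x+\varepsilon_y>0$, one of the transverse eigenvalues is positive, so the corresponding transverse direction expands. I would verify that this forces one of the matrices $M^{(j)}$ to violate conditions~(i)--(iii) of Lemma~\ref{lem:cond(i)-(iii)}; by Theorem~\ref{thm:stab-indices}(a) every $\sigma_j=-\infty$ and the cycle is not an attractor.

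For part~(ii), when $\varepsilon_x+\varepsilon_y<0$, I would first check that all conditions~(i)--(iii) of Lemma~\ref{lem:cond(i)-(iii)} are satisfied, so Theorem~\ref{thm:stab-indices}(b) yields f.a.s.\ and provides a finite list of vectors $\boldsymbol{\beta}_1,\ldots,\boldsymbol{\beta}_K$ (built from rows of $\mathrm{Id}-M^{(j)}$, as spelled out in \cite{GdSC}) whose images under $F^{\textnormal{index}}$ minimise to the stability index along each connection. Using the explicit formula for $F^{\textnormal{index}}$ recalled in Appendix~\ref{sec:Findex}, I would compute these minima and show that the active entries simplify exactly to $(1-\varepsilon_x)/(1+\varepsilon_x)$ and $(1-\varepsilon_y)^2/(2(1+\varepsilon_y))$ for $\sigma_0$, and to the symmetric pair for $\sigma_1$. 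Since $\varepsilon_x,\varepsilon_y\in(-1,1)$ both quantities are strictly positive, so $\sigma_0,\sigma_1>0$ along every connection of $C_0$; Theorem~\ref{thm:e.a.s.} then upgrades f.a.s.\ to e.a.s.

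The main obstacle will be the bookkeeping in part~(ii): writing down the correct candidate vectors $\boldsymbol{\beta}_i$ from the cyclic product of $M_0$ and $M_1$, and then identifying, among the several expressions delivered by $F^{\textnormal{index}}$, which ones realise the minimum. The algebraic simplification from ratios of eigenvalues to the displayed closed forms is routine but delicate; any sign mistake in the transverse eigenvalues would shift the threshold $\varepsilon_x+\varepsilon_y=0$ and invalidate both parts of the conclusion.
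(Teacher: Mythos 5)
Your overall architecture matches the paper's: build the basic transition matrices $M_0,M_1$ for the two connections, form the products $M^{(0)}=M_1M_0$ and $M^{(1)}=M_0M_1$, test conditions (i)--(iii) of Lemma~\ref{lem:cond(i)-(iii)}, and in the stable case evaluate $F^{\textnormal{index}}$ on the candidate vectors and take minima. Part (ii) as you sketch it is essentially the paper's Step 2 (modulo the detail that the candidate vectors come from the rows of $M_0$, of $M^{(0)}$, and from the relevant row of the change-of-basis matrix to the eigenbasis of $M^{(0)}$, not from $\mathrm{Id}-M^{(j)}$; and you must also explicitly verify condition (iii) by checking the positivity of the entries of the dominant eigenvector, which the paper does by direct computation using $\det M^{(0)}=1$ and $p(\lambda_1)=0$).

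The genuine gap is in your explanation of where the threshold $\varepsilon_x+\varepsilon_y=0$ comes from, and it would derail part (i). You claim the sign of a transverse eigenvalue of the linearisation is governed by $\operatorname{sgn}(\varepsilon_x+\varepsilon_y)$ and that a positive transverse eigenvalue then forces a violation of Lemma~\ref{lem:cond(i)-(iii)}. Neither is true: by Table~\ref{tab:eigen} the transverse eigenvalues at the nodes of $C_0$ are $\pm\frac{1\pm\varepsilon_x}{2}$, $\pm\frac{1\pm\varepsilon_y}{2}$, whose signs are fixed for all $\varepsilon_x,\varepsilon_y\in(-1,1)$; in particular every node of a cycle inside this network always has a positive transverse eigenvalue (that is exactly why $C_0$ can never be asymptotically stable), so no dichotomy can be read off a single linearisation eigenvalue. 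The dichotomy lives in the spectrum of the $3\times3$ product $M^{(0)}$: one must show (a) that its characteristic polynomial always has one real root $\lambda_1>0$ and a complex conjugate pair $\alpha\pm\beta i$ (the paper proves this via a discriminant computation in $\varepsilon_y$ for fixed $\varepsilon_x$), (b) that exactly one root has positive real part (Routh--Hurwitz), and (c) that, since $\det M^{(0)}=1$ gives $\lambda_1(\alpha^2+\beta^2)=1$, the real root is the maximal-modulus root if and only if $\lambda_1>1$, which after a three-case analysis on the signs of $\mathrm{Tr}(M^{(0)})=\frac{-3-3\varepsilon_x-3\varepsilon_y+\varepsilon_x\varepsilon_y}{4}$ and $\mathrm{B}(M^{(0)})=\frac{-3+3\varepsilon_x+3\varepsilon_y+\varepsilon_x\varepsilon_y}{4}$ is equivalent to $\varepsilon_x+\varepsilon_y<0$. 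This spectral analysis is the mathematical core of the theorem and is absent from (indeed contradicted by) your outline; without it you have no proof of part (i) and no justification that conditions (i)--(ii) of the Lemma hold in part (ii).
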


The condition $\varepsilon_{x}+\varepsilon_{y}<0$ guarantees that the payoff for a tie is negative for at least one of the players. The payoff for winning is considerably higher than the payoff for a tie from the point of view of such a player. Hence, it makes sense that a choice of action that leads to a tie on the next round is avoided. The players only 
switch to best responses.
 
 The following result is necessary for the proof of Theorem~\ref{thm:stab-C0}.
 
 \begin{lemma}\label{lem:aux}
The transition matrix $M=M^{\left(0\right)}$ in \eqref{eq:trans-matrix-C0} of Appendix \ref{subsec:trans-mat} has one positive real eigenvalue and a pair of complex, non-real, eigenvalues.
 \end{lemma}
 
 \begin{proof}
 The eigenvalues of $M$ are the roots of the characteristic polynomial
\begin{equation}
p\left(\lambda\right)=-\lambda^{3}+\textrm{Tr}\left(M\right)\lambda^{2}-\textrm{B}\left(M\right)\lambda+\textrm{Det}\left(M\right),
\label{eq:charpoly}
\end{equation}
where $\textrm{Tr}\left(M\right)$ and $\textrm{Det}\left(M\right)$ are respectively the trace and the  determinant of $M$, and
\[
\begin{aligned}
\textrm{B}\left(M\right)= &
\left|\begin{array}{cc}
\dfrac{-1-3\varepsilon_x-\varepsilon_y+\varepsilon_x\varepsilon_y}{4} & \dfrac{1-\varepsilon_x}{2}\\[.4cm]
\dfrac{3+\varepsilon_y^2}{4} & -\dfrac{1+\varepsilon_y}{2}
\end{array}\right|\\
 & +
\left|\begin{array}{cc}
-\dfrac{1+\varepsilon_y}{2} & 0\\[.4cm]
1 & 0
\end{array}\right|+
\left|\begin{array}{cc}
\dfrac{-1-3\varepsilon_x-\varepsilon_y+\varepsilon_x\varepsilon_y}{4} & 1\\[.4cm]
\dfrac{1-\varepsilon_y}{2} & 0
\end{array}\right|.
\end{aligned}
\]
The Fundamental Theorem of Algebra states that $p(\lambda)=0$ has precisely three roots $\lambda_{1},\lambda_{2},\lambda_{3}\in \C$ such that
\begin{align}
\textrm{Tr}\left(M\right) & =\lambda_{1}+\lambda_{2}+\lambda_{3}=\frac{-3-3\varepsilon_{x}-3\varepsilon_{y}+\varepsilon_{x}\varepsilon_{y}}{4},\label{eq:tr}\\
\textrm{B}\left(M\right) & =\lambda_{1}\lambda_{2}+\lambda_{1}\lambda_{3}+\lambda_{2}\lambda_{3}=\frac{-3+3\varepsilon_{x}+3\varepsilon_{y}+\varepsilon_{x}\varepsilon_{y}}{4},\label{eq:B}\\
\textrm{Det}\left(M\right) & =\lambda_{1}\lambda_{2}\lambda_{3}=1.\nonumber
\end{align}

By virtue of the Routh-Hurwitz Criterion (see Arnold 2000), the number of roots with positive real part equals the number of sign changes of the sequence
$$
-1, \quad \textrm{Tr}\left(M\right), \quad \dfrac{1-\textrm{B}\left(M\right)\textrm{Tr}\left(M\right)}{\textrm{Tr}\left(M\right)}, \quad 1.
$$
For all $\varepsilon_x,\varepsilon_y \in \left(-1,1\right)$ we have 
$$
\textrm{Tr}\left(M\right)\textrm{B}\left(M\right)=\frac{1}{16}\left(3-\varepsilon_{x}\varepsilon_{y}\right)^{2}-\frac{9}{16}\left(\varepsilon_{x}+\varepsilon_{y}\right)^{2} \in \left(-2,1\right),
$$
yielding  
$$
sgn\left(\dfrac{1-\textrm{B}\left(M\right)\textrm{Tr}\left(M\right)}{\textrm{Tr}\left(M\right)}\right) = sgn\left(\textrm{Tr}\left(M\right)\right).
$$
Hence, there is exactly one root with positive real part (which must be a real root).

To show that the remaining eigenvalues are non-real, we look at the discriminant of $p(\lambda)$ in~\eqref{eq:charpoly}, a real cubic polynomial, which is
\[
\begin{aligned}
\Delta(& \varepsilon_{x},\varepsilon_{y})\\
& = 18\textrm{Tr}\left(M\right)\textrm{B}\left(M\right)-4\textrm{Tr}\left(M\right)^{3}+\textrm{Tr}\left(M\right)^{2}\textrm{B}\left(M\right)^{2}-4\textrm{B}\left(M\right)^{3}-27\\
& =  \begin{aligned}[t]\frac{1}{256} & \left[\left(\varepsilon_{x}^{2}-9\right)^{2}\varepsilon_{y}^{4}+\left(-80\varepsilon_{x}^{3}-432\varepsilon_{x}\right)\varepsilon_{y}^{3} \right. \\
 & +\left(-18\varepsilon_{x}^{4}-396\varepsilon_{x}^{2}-162\right)\varepsilon_{y}^{2}+\left(-432\varepsilon_{x}^{3}-3024\varepsilon_{x}\right)\varepsilon_{y}\\
 & +81\varepsilon_{x}^{4}-162\varepsilon_{x}^{2}-3375\Big].
\end{aligned}
\end{aligned}
\]
For each value of $\varepsilon_{x}\in \left(-1,1\right)$ we can regard $\Delta(\varepsilon_x,\cdot)$ as a real quartic polynomial in the variable $\varepsilon_{y}$.
Its discriminant is in turn given by
$$
-\frac{59049}{67108864}\left(\varepsilon_{x}^{2}+15\right)^{3}\left(\varepsilon_{x}^{2}+3\right)^{8}.
$$
This is negative for every $\varepsilon_{x} \in \left(-1,1\right)$ and hence $\Delta\left(\varepsilon_{x},\cdot\right)$ has two distinct real roots and two complex conjugate non-real roots. The coefficient of the leading term of $\Delta\left(\varepsilon_{x},\cdot\right)$ is positive. Together with
\[
\begin{aligned}\Delta\left(\varepsilon_{x},-1\right) & =-\frac{1}{4}\left(1-\varepsilon_{x}\right)\left(\varepsilon_{x}^{3}+9\varepsilon_{x}^{2}+54\right)<0\\
\Delta\left(\varepsilon_{x},1\right) & =-\frac{1}{4}\left(1+\varepsilon_{x}\right)\left(-\varepsilon_{x}^{3}+9\varepsilon_{x}^{2}+54\right)<0
\end{aligned}
\]
for all $\varepsilon_{x} \in \left(-1,1\right)$ implies
$$\Delta\left(\varepsilon_{x},\varepsilon_{y}\right)<0$$ 
for all $\varepsilon_{x},\varepsilon_{y} \in \left(-1,1\right)$. Consequently, $p\left(\lambda\right)$ has one real root and two complex conjugate non-real roots: otherwise, $\Delta(\varepsilon_x,.)$ would have a real root between -1 and 1, in addition to real roots between $-\infty$ and -1 and between 1 and $+\infty$; hence it could not have two non-real roots.
 \end{proof}
 
\begin{proof} 
({\em of Theorem~\ref{thm:stab-C0}}) 
In Step 1\footnote{Step 1 has been made this simple by an anonymous reviewer whom we thank.}, we establish that the conditions for Lemma \ref{lem:cond(i)-(iii)} hold if and only if $\varepsilon_{x}+\varepsilon_{y}<0$. 
This is enough to prove part (i) according to Theorem \ref{thm:stab-indices}(a). 
In Step 2, when $\varepsilon_{x}+\varepsilon_{y}<0$, we use the function $F^{\textrm{index}}$ to calculate the stability index for each heteroclinic connection in the $C_{0}$-cycle. 
We show that both indices are positive; hence it follows from Theorem~\ref{thm:e.a.s.} that $C_0$ is essentially asymptotically stable.

\paragraph{Step 1:}

Consider the transition matrices $M^{\left(0\right)}$ and $M^{\left(1\right)}$ around the whole $C_{0}$-cycle in \eqref{eq:trans-matrix-C0} of Appendix \ref{subsec:trans-mat}. Since they are similar\footnote{We say that two square matrices of the same order, $A$ and $B$, are similar if there exists an invertible matrix $P$ such that $B=P^{-1}AP$. In particular, similar matrices have the same characteristic polynomial. In this case, we have $M_0^{-1}M^{(1)}M_0=M^{(0)}$.} conditions \emph{(i)--(ii)} of Lemma \ref{lem:cond(i)-(iii)} will simultaneously hold, or not hold true, for either matrices. 
Set then $M\equiv M^{(0)}$.

Lemma~\ref{lem:aux} establishes the type of eigenvalues of $M$.
Denote by $\lambda_1>0$ the real eigenvalue of $M$ and by $\alpha\pm\beta i$ the complex, non-real, eigenvalues of $M$, where $\alpha < 0$ is a consequence of the Routh-Hurwitz Criterion. 

Let $\lambda_{\max}$ be the maximum in absolute value root of $p(\lambda)$. Since
\begin{equation}\label{eq:det}
\det\left(M\right) = \lambda_1(\alpha^2+\beta^2)=1 \Leftrightarrow \lambda_1=\dfrac{1}{\alpha^2+\beta^2}
\end{equation}
it follows that 
$$
\lambda_1=\lambda_{\max} \Leftrightarrow \lambda_1>1.
$$
In order to prove that $\varepsilon_x+\varepsilon_y>0$ is equivalent to $\lambda_1 \neq \lambda_{\max}$, we note
 that, from \eqref{eq:tr} and \eqref{eq:B},
 $$
\textrm{Tr}\left(M\right)=\lambda_1 +2\alpha = \frac{-3-3\varepsilon_{x}-3\varepsilon_{y}+\varepsilon_{x}\varepsilon_{y}}{4}
$$
and 
$$
\textrm{B}\left(M\right)=2\alpha \lambda_1+\alpha^2+\beta^2 = \frac{-3+3\varepsilon_{x}+3\varepsilon_{y}+\varepsilon_{x}\varepsilon_{y}}{4},
$$
so that 
$$
\textrm{B}\left(M\right)-\textrm{Tr}\left(M\right)= 2\alpha (\lambda_1-1)+\alpha^2+\beta^2 - \lambda_1 = \frac{3}{2}(\varepsilon_x+\varepsilon_y).
$$
Using \eqref{eq:det}, we can write 
 $$
\alpha^2+\beta^2 - \lambda_1 = -\dfrac{1+\lambda_1}{\lambda_1}(\lambda_1-1)
$$
and thus,
$$
\left[ 2\alpha - \dfrac{1+\lambda_1}{\lambda_1} \right](\lambda_1-1) = \frac{3}{2}(\varepsilon_x+\varepsilon_y).
$$
Since $\alpha<0$ and $\lambda_1 >0$, the quantities $\lambda_1-1$ and $\varepsilon_x+\varepsilon_y$ have opposite signs. Hence, $\lambda_1 = \lambda_{\max}$ if and only if $\varepsilon_x+\varepsilon_y<0$.

\medskip

\paragraph{Step 2:}

To determine the stability index $\sigma_0$ along the heteroclinic connection $\left[\xi_1 \rightarrow \xi_0 \right]$ we examine the $\delta$-local basin of attraction of the $C_0$-cycle in a neighbourhood of  $\left[\xi_1 \rightarrow \xi_0 \right]$. The latter is defined as $\mathcal{B}_{\delta}^{\pi_{0}}$ in~\eqref{eq:basin-C0} of Appendix~\ref{sec:basin}.
We deduce that in the new coordinates~\eqref{eq:log-coord} the same is described by means of the matrices $M_0$ and $M^{(0)}=M_1M_{0}$. 
Exactly one entry of each $M_q$, $q=0,1$, is negative and so Theorem~\ref{thm:stab-indices} states that
\begin{align}
\sigma_{0} = & \min \Bigg\{ 
F^{\textrm{index}}\left(\boldsymbol{v}^{\max,0}\right),\label{eq:vmax} \\[.2cm]
& \begin{aligned}[c] \, \min \bigg\{& F^{\textrm{index}}\left(\dfrac{1-\varepsilon_{y}}{2},1,0\right), F^{\textrm{index}}\left(-\dfrac{1+\varepsilon_{x}}{2},0,1\right),\\[.1cm]
& F^{\textrm{index}}\left(1,0,0\right) \bigg\}, \end{aligned} \label{eq:rowM0} \\[.2cm]
& \begin{aligned}[c] \, \min \bigg\{& F^{\textrm{index}}\left(\dfrac{-1-3\varepsilon_x-\varepsilon_y+\varepsilon_x\varepsilon_y}{4},\dfrac{1-\varepsilon_x}{2},1\right),\\[.1cm]
& F^{\textrm{index}}\left(\dfrac{3+\varepsilon_y^2}{4},-\dfrac{1+\varepsilon_y}{2},0\right),
 F^{\textrm{index}}\left(\dfrac{1-\varepsilon_y}{2}, 1, 0\right)
\bigg\} \Bigg\}.
\end{aligned} \label{eq:rowM^0} 
\end{align}
The vector
$\boldsymbol{v}^{\max,0}$ in~\eqref{eq:vmax} is the row of the change of basis matrix from the basis of eigenvectors for $M^{(0)}$ to the canonical basis for $\R^3$ in the position associated with $\lambda_{\max}$.
Moreover, \eqref{eq:rowM0} takes the rows of $M_0$ while \eqref{eq:rowM^0} takes the rows of $M^{(0)}$.

Simple algebra attests that $\boldsymbol{v}^{\max,0}$ is a constant multiple of the vector
$$
\left(\dfrac{\lambda_1}{\left(\alpha-\lambda_1\right)^2+\beta^2},
\dfrac{\frac{4}{3+\varepsilon_y^2}\left[\left(\alpha+\frac{1+\varepsilon_y}{2}\right)^2+\beta^2\right]+\frac{1-\varepsilon_x}{2}}{\left(\alpha-\lambda_1\right)^2+\beta^2},
\dfrac{\frac{4}{3+\varepsilon_y^2}}{\left(\alpha-\lambda_1\right)^2+\beta^2}
\right).
$$
Hence its entries are all non-negative for any $\varepsilon_x,\varepsilon_y\in\left(-1,1\right)$. According to the values of $F^{\textnormal{index}}$ in Appendix \ref{sec:Findex} it follows that
$$
F^{\textrm{index}}\left(\boldsymbol{v}^{\max,0}\right)=+\infty.
$$
Again, non-negative entries lead to
$$
F^{\textrm{index}}\left(\frac{1-\varepsilon_y}{2},1,0\right)=F^{\textrm{index}}\left(1,0,0\right)=+\infty.
$$
On the other hand, when at least one entry is negative, we get
\[
\begin{aligned}
F^{\textrm{index}}\left(-\frac{1+\varepsilon_{x}}{2},0,1\right) & = \frac{1-\varepsilon_x}{1+\varepsilon_x} \\[0.2cm]
F^{\textrm{index}}\left(\frac{3+\varepsilon_y^2}{4},-\frac{1+\varepsilon_y}{2},0\right)  & = \frac{\left(1-\varepsilon_y\right)^2}{2\left(1+\varepsilon_y\right)}
\end{aligned}
\]
and 
\[
\begin{aligned}
& F^{\textrm{index}}\left(\frac{-1-3\varepsilon_x-\varepsilon_y+\varepsilon_x\varepsilon_y}{4}, \frac{1-\varepsilon_x}{2},1\right)\\ 
& \qquad \qquad =\begin{cases}
+\infty, & \text{if }-1-3\varepsilon_x-\varepsilon_y+\varepsilon_x\varepsilon_y>0\\[0.2cm]
\dfrac{\left(5-\varepsilon_y\right)\left(1-\varepsilon_x\right)}{1+3\varepsilon_x+\varepsilon_y-\varepsilon_x\varepsilon_y}, & \text{if }-1-3\varepsilon_x-\varepsilon_y+\varepsilon_x\varepsilon_y<0.
\end{cases}
\end{aligned}
\]
A straightforward comparison shows that for all $\varepsilon_x,\varepsilon_y\in\left(-1,1\right)$
\[
F^{\textrm{index}}\left(\frac{-1-3\varepsilon_x-\varepsilon_y+\varepsilon_x\varepsilon_y}{4}, \frac{1-\varepsilon_x}{2},1\right)>F^{\textrm{index}}\left(-\frac{1+\varepsilon_{x}}{2},0,1\right)
\]
and
\[
\sigma_0=\min\left\{\dfrac{1-\varepsilon_x}{1+\varepsilon_x},\dfrac{\left(1-\varepsilon_y\right)^2}{2\left(1+\varepsilon_y\right)} 
\right\}>0.
\] 

The proof for $\sigma_1$ runs as before by interchanging $\varepsilon_x$ and $\varepsilon_y$ in the calculations so that 
for all $\varepsilon_x,\varepsilon_y\in\left(-1,1\right)$
\[
\sigma_1=\min\left\{\dfrac{1-\varepsilon_y}{1+\varepsilon_y},\dfrac{\left(1-\varepsilon_x\right)^2}{2\left(1+\varepsilon_x\right)} 
\right\}>0.
\]
\end{proof}

The stability of the remaining heteroclinic cycles in the heteroclinic network of the RSP game is given in Theorems \ref{thm:stab-C1}--\ref{thm:stab-C4}. The proofs are omitted as they are analogous to that of Theorem \ref{thm:stab-C0} using the appropriate transition matrices in Appendix \ref{subsec:trans-mat}. In the statement of the following results it is useful to define
\begin{eqnarray*}
b_1 & = & \left(5-\varepsilon_{x}\right)\varepsilon_{y}^{2}+\left(\varepsilon_{x}^{2}+10\varepsilon_{x}+1\right)\varepsilon_{y}-\left(1-\varepsilon_{x}\right)\left(4+5\varepsilon_{x}\right) \\
b_2 & = & \left(5+\varepsilon_{x}\right)\varepsilon_{y}^{2}+\left(-\varepsilon_{x}^{2}+10\varepsilon_{x}-1\right)\varepsilon_{y}-\left(1+\varepsilon_{x}\right)\left(4-5\varepsilon_{x}\right).
\end{eqnarray*}

\begin{theorem}\label{thm:stab-C1}
For the $C_{1}$-cycle of the RSP game and any $\varepsilon_x,\varepsilon_y\in\left(-1,1\right)$,
\begin{enumerate}
\item[(a)] if either
$\varepsilon_{x}+\varepsilon_{y}<0$, or
$b_1<0$, or $\varepsilon_{x}-\varepsilon_{y}>0$,
then all stability indices are~$-\infty$, and  the cycle is completely unstable;
\item[(b)] if 
$\varepsilon_{x}+\varepsilon_{y}>0$, and $b_1>0$, and 
$\varepsilon_{x}-\varepsilon_{y}<0$,
then the stability indices are
\[
\begin{aligned}
\sigma_{1} = & \dfrac{-4+\varepsilon_x+\left(3-\varepsilon_x\right)\varepsilon_y+\varepsilon_y^2}{\left(1-\varepsilon_x\right)\left(1+\varepsilon_y\right)}<0 \\
\sigma_{2} = & \min\left\{ \dfrac{\varepsilon_{y}-\varepsilon_{x}}{1-\varepsilon_{y}}, \dfrac{1+2\varepsilon_{x}+\varepsilon_{y}^2}{2\left(1-\varepsilon_{x}\right)}\right\}>0,
\end{aligned}
\]
and the cycle is fragmentarily asymptotically stable.
\end{enumerate}
\end{theorem}

Along the $C_{1}$-cycle player $Y$ never wins. When both $\varepsilon_{x}+\varepsilon_{y}>0$ and $\varepsilon_{x}-\varepsilon_{y}<0$, we have that $\varepsilon_y>0$ so that player $Y$'s payoff for a tie is greater than the average payoff.
It seems reasonable that 
player $Y$ may then settle for a better response, leading only to a tie, rather than switching to the best response. 
The set bounded by $b_1=0$ ensures $\varepsilon_y$ is high enough to sustain continually this choice. The analogous occurs for player $X$ along the $C_2$-cycle as shown in the following

\begin{theorem}\label{thm:stab-C2}
For the $C_{2}$-cycle of the RSP game and any $\varepsilon_x,\varepsilon_y\in\left(-1,1\right)$,
\begin{enumerate}
\item[(a)] if either 
$\varepsilon_{x}+\varepsilon_{y}<0$, or
$b_2<0$, or 
$\varepsilon_{x}-\varepsilon_{y}<0$,
then all stability indices are~$-\infty$, and  the cycle is completely unstable;
\item[(b)] if 
$\varepsilon_{x}+\varepsilon_{y}>0$, and
$b_2>0$, and 
$\varepsilon_{x}-\varepsilon_{y}>0$,
then the stability indices are
\[
\begin{aligned}
\sigma_{0} = & \dfrac{-4+\varepsilon_y+\left(3-\varepsilon_y\right)\varepsilon_x+\varepsilon_x^2}{\left(1-\varepsilon_y\right)\left(1+\varepsilon_x\right)}<0 \\
\sigma_{2} = & \min\left\{ \dfrac{\varepsilon_{x}-\varepsilon_{y}}{1-\varepsilon_{x}}, \dfrac{1+2\varepsilon_{y}+\varepsilon_{x}^2}{2\left(1-\varepsilon_{y}\right)}\right\}>0,
\end{aligned}
\]
and the cycle is fragmentarily asymptotically stable.
\end{enumerate}
\end{theorem}

We see that the $C_{1}$- and $C_{2}$-cycles are never essentially asymptotically stable making them difficult to detect in simulations. They are fragmentarily asymptotically stable in a subset of the complement of the stability region for the $C_{0}$-cycle in the two-parameter space, see Figure~\ref{fig:attraction}. 

\begin{theorem}\label{thm:stab-C3}
For the $C_{3}$-cycle of the RSP game, all stability indices are~$-\infty$ for any $\varepsilon_{x},\varepsilon_{y} \in \left(-1,1\right)$, and the cycle is completely unstable.
\end{theorem}

\begin{theorem}\label{thm:stab-C4}
For the $C_{4}$-cycle of the RSP game, all stability indices are~$-\infty$ for any $\varepsilon_{x},\varepsilon_{y} \in \left(-1,1\right)$, and the cycle is completely unstable.
\end{theorem}

Recall that along the $C_3$- and $C_4$-cycles one of the players switches actions in two steps, first to a better, not best, response and only then to a best response. 
Intuitively, these heteroclinic cycles never exhibit any kind of stability because the gain incurred as a result of deviating outweighs the possible loss. Recall that along $C_3$ player $X$ waits for two consecutive choices of player $Y$ before switching action. However, if player $X$ were to play immediately after the first choice of player $Y$, the $C_1$-cycle would be followed and the average payoff of player $X$ would increase. The opposite is observed along $C_4$: a deviation in the timing of play by player $Y$ results in an increase of $Y$'s average playoff by following $C_2$.
This justifies the absence of the $C_3$- and $C_4$-cycles from the numerical observations made by Sato~\emph{et~al.}~(2005). 

The admissible regions where the RSP cycles can be stable are depicted in Figure~\ref{fig:attraction}. 

\begin{figure}[!htb]
\centering{}\includegraphics{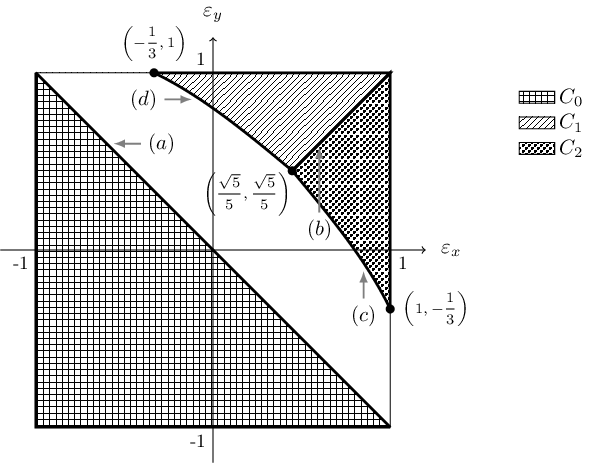}
\caption{\label{fig:attraction}Stability regions for the $C_0$-, $C_1$- and $C_2$-cycles in the two-parameter space. The lines in the figure are: (\emph{a}) $\varepsilon_{x}+\varepsilon_{y}=0$;
(\emph{b}) $\varepsilon_{x}-\varepsilon_{y}=0$; (\emph{c}) $\varepsilon_{y}=\frac{1-10\varepsilon_{x}+\varepsilon_{x}^{2}+\sqrt{81-24\varepsilon_{x}-2\varepsilon_{x}^{2}-40\varepsilon_{x}^{3}+\varepsilon_{x}^{4}}}{2\left(5+\varepsilon_{x}\right)}$;
(\emph{d}) $\varepsilon_{y}=\frac{-\left(1+10\varepsilon_{x}+\varepsilon_{x}^{2}\right)+\sqrt{81+24\varepsilon_{x}-2\varepsilon_{x}^{2}+40\varepsilon_{x}^{3}+\varepsilon_{x}^{4}}}{2\left(5-\varepsilon_{x}\right)}$.}
\end{figure}

Concerning other alternatives of play other than those studied above, we have conjectured them to be unstable. The reason for this is similar to that used to interpret the instability of $C_3$ and $C_4$. In fact, assuming that no player chooses an action that results in a loss, any other sequence of outcomes has to be achieved by a timing of play where players do not alternate in making their choice of action. As for $C_3$ and $C_4$ a deviation from this timing by the player that is supposed to wait leads to a more favourable outcome for this player.

\section{Concluding remarks}

We complete the study of the asymptotic behaviour in a RSP game governed by coupled replicator equations. It is a two-person (bimatrix) game with asymmetric players. 
Making use of recent developments in the study of dynamical systems, particularly in the study of stability of heteroclinic cycles, we classify three types of heteroclinic orbits for the two-person RSP game according to their stability as a function of two parameters. Such parameters describe the payoffs players receive when the outcome of their choice of actions is a tie. We allow them to range from almost as bad as a loss to almost as good as a win. 
Then we prove that if at least one player has a negative payoff for a tie, low enough that the sum of payoffs for a tie is itself negative, both players unilaterally avoid any choice of action leading to a tie. 
In the next stage game each player seeks the most favourable outcome towards the current opponent's strategy choice (the $C_0$-cycle). 
This is a situation that is stable in a strong sense for half of the two-parameter space. 
It is also consistent with the one-person (single-population) replicator dynamics where the heteroclinic cyclic on the boundary is asymptotically stable when it leads to payoffs that are higher than the equilibrium payoffs, and unstable if it leads to payoffs that are lower than the equilibrium payoffs.\footnote{We thank an anonymous reviewer for pointing this similarity out to us.}

On the other hand, the behavioural adjustment when the sum of payoffs for a tie is positive is not as stable. Even so it does exhibit some low level of stability if the payoffs for a tie are sufficiently high. There is now an incentive to play for a tie. 
This can be explained by the fact that if one player oscillates between a tie and a win, then its opponent that never wins is not satisfied with the outcome and tries to draw the game successively (the $C_1$- and $C_2$-cycles). 

We observe that the stability regions for the $C_0$-, $C_1$- and $C_2$-cycles are disjoint. The heteroclinic cycles, $C_3$ and $C_4$, for which play goes through all possible combinations of outcomes are never stable.

These stability results are consistent with numerical simulations and experiments referred throughout the current work and may help to clarify empirical examples of the RSP cycles discovered in nature and economics.

We note that this two-person RSP game is not zero-sum\footnote{We are, in fact, interested in the non-zero-sum game corresponding to $\varepsilon_x+\varepsilon_y\neq0$.}, consistent with Sigmund's (2011) concern that ``For most types of social and economic interactions, the assumption that the interests of the two players are always diametrically opposite does not hold.'' In addition, the payoff matrices reflect asymmetry, which is a feature that arises either in interpopulation or intrapopulation interactions. Social and economic dilemmas between consumers and sellers, firms and workers are typical examples where agents frequently adopt asymmetric positions. Also, differences in access to, and availability of, resources asymmetrically affect individuals' behaviour within each class of agents. 
We therefore contribute to a systematic treatment of the asymmetric version of replicator dynamics in the class of RSP games.  

Our results suggest that the two-person RSP game may be a good tool for modelling cyclic dominance where two fixed players independently exchange a winning position. For instance, the dynamics of the gasoline retail market by Noel~(2007) have reported that a major and an independent firms alternate in setting the highest price (see Figure~1 therein). If we assume that consumers buy at the lowest price, the firms can be regarded as switching to pure best replies in a two-person RSP game where the actions are ``fix a low price''~(R), ``fix an intermediate price''~(S) and ``fix a high price''~(P).  
This corresponds to the win-loss pattern of the $C_0$-cycle.
Hopkins and Seymour~(2002) have in turn shown that the existence of informed consumers precludes price dispersion. This is the case, however, in a model where only the firms are players. The two-person game presented here can complement the information provided in Hopkins and Seymour~(2002) by including consumers as active players in an equal footing to the firms. Price dispersion then appears not because different firms set different prices but because firms choose different prices over time. This is in line with the temporal price dispersion of Varian~(1980).

We end with a remark on the relation of our results to best-response dynamics: Hofbauer {\em et al.} (2009) have shown that the time-average of replicator dynamics are perturbed solutions of best-response dynamics. The time-average of trajectories converging to a heteroclinic cycle in replicator dynamics corresponds to trajectories of best-response dynamics converging to a periodic orbit (a Shapley orbit). Although we do not know the dynamics when all cycles are unstable, the results of van Strien and Sparrow (2011) indicate that very complex replicator dynamics could occur.\footnote{The second author and S.\ van Strien have work in preparation concerning this point.}

\paragraph{Acknowledgements:}
The second author is grateful to A.\ Rucklidge for an interesting conversation.
The  two authors were partially supported by CMUP (UID/MAT/00144/2013), which is funded by FCT (Portugal) with national (MEC) and European structural funds (FEDER), under the partnership agreement PT2020. L.\ Garrido-da-Silva is the recipient of the doctoral grant PD/BD/105731/2014 from FCT (Portugal).

\paragraph{Conflict of interest:} The authors declare that they have no conflict of interest.

\appendix

\section{Transitions near the RSP cycles}\label{sec:transitions}

In this section we describe the construction of Poincar\'e maps (also called return maps) from and to cross sections of the flow near each node once around an entire heteroclinic cycle. The Poincar\'e maps are the composition of local and global maps.
The local maps approximate the flow in a neighbourhood of a node. The global maps approximate the flow along a heteroclinic connection between two consecutive nodes. 

Near $\xi_j$ we introduce an incoming section $H_{j}^{in,i}$ across the heteroclinic connection $\left[\xi_{i}\rightarrow\xi_{j}\right]$ and an outgoing section $H_{j}^{out,k}$ across the heteroclinic connection $\left[\xi_{j}\rightarrow\xi_{k}\right]$, $j \neq i,k$. By definition, these are five-dimensional subspaces in $\R^6$. Krupa and Melbourne~(2004) have shown that not all dimensions are important in the study of stability of heteroclinic cycles as followed.

\subsection{Poincar\'e maps}

Assume that the flow is linearisable about each node (see Proposition 4.1. in Aguiar and Castro (2010) for detailed conditions). Locally at $\xi_j$, we denote by $-c_{ji}<0$ the eigenvalue in the stable direction through the heteroclinic connection $\left[\xi_{i}\rightarrow\xi_{j}\right]$ and $e_{jk}>0$ the eigenvalue in the unstable direction through the heteroclinic connection $\left[\xi_{j}\rightarrow\xi_{k}\right]$. As illustrated in Figure~\ref{fig:eigen-sec}(a) each node has two incoming connections and two outgoing connections.
Looking at $\xi_j$ from the point of view of the  sequence of heteroclinic connections $\left[\xi_i\rightarrow\xi_j\rightarrow\xi_k\right]$, we say that $-c_{ji}$ is \emph{contracting}, $e_{jk}$ is \emph{expanding}, $-c_{jl}$ and $e_{jm}$ are \emph{transverse} with $j \neq i,k,l,m$, $i \neq l$ and $k \neq m$ (see Garrido-da-Silva and Castro~2019; Krupa and Melbourne~2004;  Podvigina~2012; Podvigina and Ashwin~2011).

\begin{figure}[!htb]
\centering{}
\subfloat[]{\includegraphics{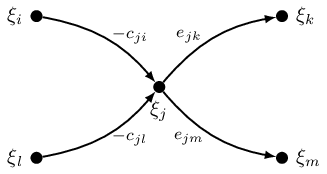}}
\hfill{}
\subfloat[]{\includegraphics{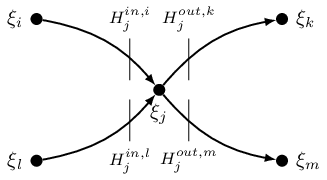}}
\caption{\label{fig:eigen-sec}Representation of (a) the eigenvalues and (b) the cross sections near a node $\xi_j$ with $j \neq i,k,l,m$, $i \neq l$ and $k \neq m$.}
\end{figure}

The linearised flow in the relevant local coordinates near $\xi_j$ is given by
\begin{equation}
\begin{aligned}
\dot{v}= & \; -c_{ji}v\\
\dot{w}= &\; e_{jk}w\\
\dot{z}_{1}= &\; -c_{jl}z_{1}\\
\dot{z}_{2}= &\; e_{jm}z_{2},
\end{aligned}
\label{eq:lin-flow}
\end{equation}
such that $v$, $w$ and $(z_1,z_2)$ correspond, respectively, to the contracting, expanding and transverse directions.
Table \ref{tab:eigen} provides all eigenvalues restricted to these directions for the three nodes $\xi_0$, $\xi_1$  and $\xi_2$.

All cross sections are reduced to a three-dimensional subspace and can be expressed as (see Figure~\ref{fig:eigen-sec}(b))
\[
\begin{aligned}
H_{j}^{in,i} & =\left\{ \left(1,w,z_{1},z_{2}\right):\; 0 \leq w,z_{1},z_{2}<1\right\} \\
H_{j}^{out,k} & =\left\{ \left(v,1,z_{1},z_{2}\right):\; 0\leq v,z_{1},z_{2}<1\right\} .
\end{aligned}
\]

We construct \emph{local maps} $\phi_{ijk}:H_{j}^{in,i}\rightarrow H_{j}^{out,k}$ near each $\xi_j$, \emph{global maps} $\psi_{jk}:H_{j}^{out,k}\rightarrow H_{k}^{in,j}$ near each heteroclinic connection $\left[\xi_{j}\rightarrow\xi_{k}\right]$, and their compositions $g_{j}=\psi_{jk} \circ \phi_{ijk}:H_{j}^{in,i}\rightarrow H_{k}^{in,j}$, $j \neq i,k$. Composing the latter successively along an entire heteroclinic cycle  yields the \emph{Poincar\'e maps} $\pi_j:H_j^{in,i} \rightarrow H_{j}^{in,i}$, one for each heteroclinic connection belonging to the heteroclinic cycle.  

Integrating \eqref{eq:lin-flow} we find
$$
\phi_{ijk}\left(w,z_{1},z_{2}\right)=\left(w^{{\textstyle \frac{c_{ji}}{e_{jk}}}},\; z_{1}w^{{\textstyle \frac{c_{jl}}{e_{jk}}}},\; z_{2}w^{-{\textstyle \frac{e_{jm}}{e_{jk}}}}\right), \quad \textrm{for }0<z_2<w^{\textstyle \frac{e_{jm}}{e_{jk}}}.
$$
 
\begin{table}
\begin{centering}
\begin{tabular}{l | l l l l}
$\xi_{0}\quad$ & $e_{01}=1$ 						& $e_{02}=\dfrac{1+\varepsilon_{x}}{2}$ 	& $-c_{01}=-1$ 							& $-c_{02}=-\dfrac{1-\varepsilon_{y}}{2}$ \\[10pt]
\hline  
$\xi_{1}\quad$ & $e_{12}=\dfrac{1+\varepsilon_{y} \T}{2}$ 	& $e_{10}=1$ 					& $-c_{12}=-\dfrac{1-\varepsilon_{x}}{2}$	& $-c_{10}=1$ \\[10pt]
\hline
$\xi_{2}\quad$ & $e_{20}=\dfrac{1-\varepsilon_{y} \T}{2}$ 		& $e_{21}=\dfrac{1-\varepsilon_{x}}{2}$ 	& $-c_{20}=-\dfrac{1+\varepsilon_{x} }{2}$ 	& $-c_{21}=-\dfrac{1-\varepsilon_{x}}{2}$ \\
\end{tabular}
\par\end{centering}
\caption{\label{tab:eigen}Eigenvalues of the linearisation of the flow about each node in a system of local coordinates in the basis of the associated contracting, expanding and transverse eigenvectors.}
\end{table}

On the other hand, expressions for global maps depend both on which heteroclinic connection and heteroclinic cycle one considers. Following the Remark on p.\ 1603 of Aguiar and Castro (2010), in the leading order any global map $\psi_{jk}$ is well represented by a permutation.

We describe the details for the cycle $C_0=\left[\xi_0\rightarrow\xi_1\rightarrow\xi_0\right]$.
The other cases are similar, and therefore, we omit the calculations.
Notice that 
$$
\Gamma\left((\textrm{R},\textrm{P}) \rightarrow (\textrm{S},\textrm{P}) \rightarrow (\textrm{S},\textrm{R}) \right) =C_0.
$$
We pick, respectively, the heteroclinic connections $\left[(\textrm{R},\textrm{P}) \rightarrow (\textrm{S},\textrm{P}) \right]$ and\linebreak$\left[(\textrm{S},\textrm{P}) \rightarrow (\textrm{S},\textrm{R}) \right]$ as representatives of $\left[\xi_0 \rightarrow \xi_1\right]$ and $\left[\xi_1 \rightarrow \xi_0\right]$, see Table \ref{tab:representative}. Considering the flow linearised about each representative heteroclinic connection the global maps have the form
\[
\begin{aligned}
\psi_{01}:& H_{0}^{out,1} \rightarrow H_{1}^{in,0}, \quad & \psi_{10} \left(v,z_1,z_2\right) & =\left(z_1,z_2,v\right) \\
\psi_{10}:& H_{1}^{out,0} \rightarrow  H_{0}^{in,1}, \quad & \psi_{01} \left(v,z_1,z_2\right)  & =\left(z_1,z_2,v\right).
\end{aligned}
\]
The pairwise composite maps $g_{0}=\psi_{01} \circ \phi_{101}$ and $g_{1}=\psi_{10} \circ \phi_{010}$ are
\begin{equation}
\begin{aligned}
g_{0}: & H_{0}^{in,1} \rightarrow H_{1}^{in,0}, \quad & g_{0}\left(w,z_1,z_2\right) & = \left(z_{1}w^{{\textstyle \frac{1-\varepsilon_y}{2}}},\; z_{2}w^{-{\textstyle \frac{1+\varepsilon_x}{2}}}, \; w \right), \\
& & \textrm{for } &  0<z_{2}<w^{\textstyle \frac{1+\varepsilon_x}{2}}, \\
g_{1}: & H_{1}^{in,0} \rightarrow H_{0}^{in,1}, \quad & g_{1}\left(w,z_1,z_2\right) & = \left( z_{1}w^{{\textstyle \frac{1-\varepsilon_x}{2}}},\; z_{2}w^{-{\textstyle \frac{1+\varepsilon_y}{2}}},\; w \right), \\
& & \textrm{for } &  0<z_{2}<w^{\textstyle \frac{1+\varepsilon_y}{2}}.
\end{aligned}
\label{eq:C0-gj}
\end{equation}
The dynamics in the vicinity of the $C_0$-cycle is accurately approximated by the two Poincar\'e maps $\pi_{0}=g_{1} \circ g_{0}$ and $\pi_{1}=g_{0} \circ g_{1}$ with
\begin{equation}
\begin{aligned}
\pi_{0} & : H_{0}^{in,1} \rightarrow   H_{0}^{in,1}, \\ 
\pi_{0} & \left(w,z_1,z_2\right) = \left(z_2 z_{1}^{\textstyle \frac{1-\varepsilon_x}{2}}w^{\textstyle \frac{-1-3\varepsilon_x-\varepsilon_y+\varepsilon_x \varepsilon_y}{4}},\; z_{1}^{-{\textstyle \frac{1+\varepsilon_y}{2}}}w^{\textstyle \frac{3+\varepsilon_y^2}{4}}, \; z_1 w^{\textstyle \frac{1-\varepsilon_y}{2}} \right),
\end{aligned}
\label{eq:pi0}
\end{equation}
for $0<z_{2}<w^{\textstyle \frac{1+\varepsilon_x}{2}}$ and $z_1>w^{\textstyle \frac{3+\varepsilon_y^2}{2\left(1+\varepsilon_y\right)}}$,
\begin{equation}
\begin{aligned}
\pi_{1} & : H_{1}^{in,0} \rightarrow  H_{1}^{in,0}, \\
\pi_{1} & \left(w,z_1,z_2\right) =  \left(z_2 z_{1}^{\textstyle \frac{1-\varepsilon_y}{2}}w^{\textstyle \frac{-1-\varepsilon_x-3\varepsilon_y+\varepsilon_x \varepsilon_y}{4}},\; z_{1}^{-{\textstyle \frac{1+\varepsilon_x}{2}}}w^{\textstyle \frac{3+\varepsilon_y^2}{4}}, \; z_1 w^{\textstyle \frac{1-\varepsilon_x}{2}} \right),
\end{aligned}
\label{eq:pi1}
\end{equation}
for $0<z_{2}<w^{\textstyle \frac{1+\varepsilon_y}{2}}$ and $z_1>w^{\textstyle \frac{3+\varepsilon_x^2}{2\left(1+\varepsilon_x\right)}}$.

\subsection{Transition matrices}\label{subsec:trans-mat}

Consider the change of coordinates 
\begin{equation}
\boldsymbol{\eta}\equiv\left(\eta_{1},\eta_{2},\eta_{3}\right)=\left(\ln v,\ln z_{1},\ln z_{2} \right).
\label{eq:log-coord}
\end{equation}
The maps $g_{j}:H_j^{in,i}\rightarrow H_k^{in,j}$, $j \neq i,k$, become linear 
$$
g_j\left(\boldsymbol{\eta}\right)=M_j  \boldsymbol{\eta}
$$
and $M_j$ are called the \emph{basic transition matrices}. 
For the Poincar\'e maps $\pi_j:H_j^{in,i} \rightarrow H_j^{in,i}$ the transition matrices are the product of basic transition matrices in the appropriate order. We denote them by $M^{(j)}$. 

The basic transition matrices of the maps $g_0$ and $g_1$ in \eqref{eq:C0-gj} with respect to $C_0$ are  
\[
M_{0} =\left[\begin{array}{ccc}
\dfrac{1-\varepsilon_y}{2} & 1 & 0\\[.4cm]
-\dfrac{1+\varepsilon_x}{2} & 0 & 1\\[.4cm]
1 & 0 & 0
\end{array}\right], \; \; \;  \;
M_{1} =\left[\begin{array}{ccc}
\dfrac{1-\varepsilon_x}{2} & 1 & 0\\[.4cm]
-\dfrac{1+\varepsilon_y}{2} & 0 & 1\\[.4cm]
1 & 0 & 0
\end{array}\right].
\]
Then, the products $M^{(0)}=M_1 M_0$ and $M^{(1)}=M_0 M_1$ provide the transition matrices of the Poincar\'e maps $\pi_0$ in \eqref{eq:pi0} and $\pi_1$ in \eqref{eq:pi1}:
\begin{equation}
\begin{aligned}
M^{(0)} & =\left[\begin{array}{ccc}
\dfrac{-1-3\varepsilon_x-\varepsilon_y+\varepsilon_x\varepsilon_y}{4} & \dfrac{1-\varepsilon_x}{2} & 1\\[.4cm]
\dfrac{3+\varepsilon_y^2}{4} & -\dfrac{1+\varepsilon_y}{2} & 0\\[.4cm]
\dfrac{1-\varepsilon_y}{2} & 1 & 0
\end{array}\right]\\[.3cm]
M^{(1)} & =\left[\begin{array}{ccc}
\dfrac{-1-\varepsilon_x-3\varepsilon_y+\varepsilon_x\varepsilon_y}{4} & \dfrac{1-\varepsilon_y}{2} & 1\\[.4cm]
\dfrac{3+\varepsilon_x^2}{4} & -\dfrac{1+\varepsilon_x}{2} & 0\\[.4cm]
\dfrac{1-\varepsilon_x}{2} & 1 & 0
\end{array}\right].
\end{aligned}
\label{eq:trans-matrix-C0}
\end{equation}

\begin{figure}[!htb]
\centering
\subfloat[]{\includegraphics{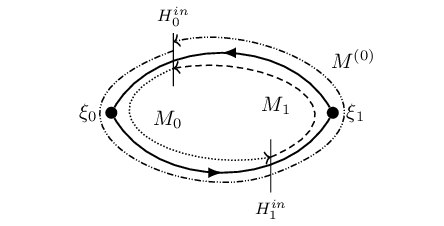}}
\subfloat[]{\includegraphics{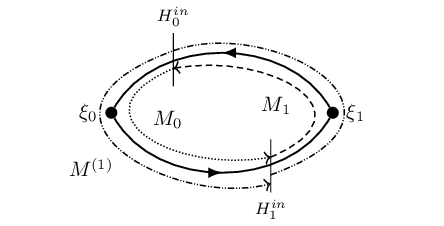}}
\caption{\label{fig:trans-map}Definition of the transition matrices (a) $M^{(0)}=M_1M_0$ and (b) $M^{(1)}=M_0M_1$ around the $C_0$-cycle. The matrix $M_0$ corresponds to the transformation represented by a dotted line and $M_1$ to that represented by a dashed line.}
\end{figure}

Analogously, this process yields the transition matrices for the remaining heteroclinic cycles. We use different accents according to the heteroclinic cycle: 
for $C_1$,
\[
\begin{aligned}
\widetilde{M}^{(1)}: &  H_1^{in,2} \rightarrow  H_1^{in,2}, \quad & \widetilde{M}^{(1)} & = \widetilde{M}_2 \widetilde{M}_1\\
\widetilde{M}^{(2)}: &  H_2^{in,1} \rightarrow  H_2^{in,1}, \quad & \widetilde{M}^{(2)} & =\widetilde{M}_1 \widetilde{M}_2
\end{aligned}
\]
where
\[
\widetilde{M}_{1} =\left[\begin{array}{ccc}
\dfrac{2}{1+\varepsilon_y} & 1 & 0\\[.4cm]
\dfrac{1-\varepsilon_x}{1+\varepsilon_y} & 0 & 0\\[.4cm]
-\dfrac{2}{1+\varepsilon_y} & 0 & 1
\end{array}\right], \; \; \;  \;
\widetilde{M}_{2} =\left[\begin{array}{ccc}
-\dfrac{1-\varepsilon_y}{1-\varepsilon_x} & 0 & 1\\[.4cm]
\dfrac{1+\varepsilon_x}{1-\varepsilon_x} & 1 & 0\\[.4cm]
\dfrac{1+\varepsilon_y}{1-\varepsilon_x} & 0 & 0
\end{array}\right];
\]
for $C_2$, 
\[
\begin{aligned}
\widetilde{\widetilde{M}}^{(0)}: &  H_0^{in,2} \rightarrow  H_0^{in,2}, \quad & \widetilde{\widetilde{M}}^{(0)}& = \widetilde{\widetilde{M}_2} \widetilde{\widetilde{M}_0}\\
\widetilde{\widetilde{M}}^{(2)}: &  H_2^{in,0} \rightarrow  H_2^{in,0}, \quad & \widetilde{\widetilde{M}}^{(2)}& = \widetilde{\widetilde{M}_0} \widetilde{\widetilde{M}_2}
\end{aligned}
\]
where
\[
\widetilde{\widetilde{M}_{0}} =\left[\begin{array}{ccc}
\dfrac{2}{1+\varepsilon_x} & 1 & 0\\[.4cm]
\dfrac{1-\varepsilon_y}{1+\varepsilon_x} & 0 & 0\\[.4cm]
-\dfrac{2}{1+\varepsilon_x} & 0 & 1
\end{array}\right], \; \; \;  \;
\widetilde{\widetilde{M}_{2}} =\left[\begin{array}{ccc}
-\dfrac{1-\varepsilon_x}{1-\varepsilon_y} & 0 & 1\\[.4cm]
\dfrac{1+\varepsilon_y}{1-\varepsilon_y} & 1 & 0\\[.4cm]
\dfrac{1+\varepsilon_x}{1-\varepsilon_y} & 0 & 0
\end{array}\right];
\]
for $C_3$, 
\[
\begin{aligned}
\widehat{M}^{(1)}: &  H_1^{in,0} \rightarrow  H_1^{in,0}, \quad & \widehat{M}^{(1)} & = \widehat{M}_2 \widehat{M}_0 \widehat{M}_1\\
\widehat{M}^{(2)}: &  H_2^{in,1} \rightarrow  H_2^{in,1}, \quad & \widehat{M}^{(2)} & =\widehat{M}_0 \widehat{M}_1 \widehat{M}_2\\
\widehat{M}^{(0)}: &  H_0^{in,2} \rightarrow  H_0^{in,2}, \quad & \widehat{M}^{(0)} & =\widehat{M}_1 \widehat{M}_2 \widehat{M}_0
\end{aligned}
\]
where
\[
\widehat{M}_{1} =\left[\begin{array}{ccc}
-\dfrac{2}{1+\varepsilon_y} & 0 & 1\\[.4cm]
\dfrac{1-\varepsilon_x}{1+\varepsilon_y} & 1 & 0\\[.4cm]
\dfrac{2}{1+\varepsilon_y} & 0 & 0
\end{array}\right], \; \; \;  \;
\widehat{M}_{2} =\left[\begin{array}{ccc}
\dfrac{1+\varepsilon_x}{1-\varepsilon_y} & 1 & 0\\[.4cm]
\dfrac{1+\varepsilon_y}{1-\varepsilon_y} & 0 & 0\\[.4cm]
-\dfrac{1-\varepsilon_x}{1-\varepsilon_y} & 0 & 1
\end{array}\right],
 \; \; \;  \;
 \widehat{M}_{0} =\left[\begin{array}{ccc}
1 & 1 & 0\\[.4cm]
-\dfrac{1+\varepsilon_x}{2} & 0 & 1\\[.4cm]
\dfrac{1-\varepsilon_y}{2} & 0 & 0
\end{array}\right];
\]
and, for $C_4$, 
\[
\begin{aligned}
\widehat{\widehat{M}}^{(0)}: &  H_0^{in,1} \rightarrow  H_0^{in,1}, \quad & \widehat{\widehat{M}}^{(0)} & = \widehat{\widehat{M}_2} \widehat{\widehat{M}_1} \widehat{\widehat{M}_0}\\
\widehat{\widehat{M}}^{(2)}: &  H_2^{in,0} \rightarrow  H_2^{in,0}, \quad & \widehat{\widehat{M}}^{(2)} & = \widehat{\widehat{M}_1} \widehat{\widehat{M}_0} \widehat{\widehat{M}_2}\\
\widehat{\widehat{M}}^{(1)}: &  H_1^{in,2} \rightarrow  H_1^{in,2}, \quad & \widehat{\widehat{M}}^{(1)} & = \widehat{\widehat{M}_0} \widehat{\widehat{M}_2} \widehat{\widehat{M}_1}
\end{aligned}
\]
where
\[
\widehat{\widehat{M}_{0}} =\left[\begin{array}{ccc}
-\dfrac{2}{1+\varepsilon_x} & 0 & 1\\[.4cm]
\dfrac{1-\varepsilon_y}{1+\varepsilon_x} & 1 & 0\\[.4cm]
\dfrac{2}{1+\varepsilon_x} & 0 & 0
\end{array}\right], \; \; \;  \;
\widehat{\widehat{M}_{2}} =\left[\begin{array}{ccc}
\dfrac{1+\varepsilon_y}{1-\varepsilon_x} & 1 & 0\\[.4cm]
\dfrac{1+\varepsilon_x}{1-\varepsilon_x} & 0 & 0\\[.4cm]
-\dfrac{1-\varepsilon_y}{1-\varepsilon_x} & 0 & 1
\end{array}\right],
 \; \; \;  \;
\widehat{\widehat{M}_{1}} =\left[\begin{array}{ccc}
1 & 1 & 0\\[.4cm]
-\dfrac{1+\varepsilon_y}{2} & 0 & 1\\[.4cm]
\dfrac{1-\varepsilon_x}{2} & 0 & 0
\end{array}\right].
\]

\section{The stability index}\label{sec:basin}

Each Poincar\' e map $\pi_j:\R^3\rightarrow \R^3$ induces a discrete dynamical system through the relation $\boldsymbol{x}_{k+1}=\pi_j \left(\boldsymbol{x}_k\right)$, where $\boldsymbol{x}_k=\left(w_k,z_{1,k},z_{2,k}\right)$ denotes the state at the discrete time $k$. 
The stability of a heteroclinic cycle then follows from the stability of the fixed point at the origin of Poincar\'e maps around the former. 

As in Podvigina~(2012) and Podvigina and Ashwin~(2011), for $\delta>0$ let $\B_{\delta}^{\pi_j}$ be the $\delta$-local basin of attraction of $\boldsymbol{0}\in\R^3$ for the map $\pi_j$. Roughly speaking, $\B_{\delta}^{\pi_j}$ is the set of all initial conditions near $\xi_j$ whose trajectories remain in a $\delta$-neighbourhood of the heteroclinic cycle and converge to it. 

We use $\left\Vert \cdot \right\Vert$ to express Euclidean norm on $\R^3$.
Taking, for example, the Poincar\'e map $\pi_0$~in~\eqref{eq:pi0} with respect to the $C_0$-cycle the local basin $\B_{\delta}^{\pi_0}$ is defined to be
\begin{equation}
\begin{aligned}
\B_{\delta}^{\pi_0}=\Big\{ \boldsymbol{x} \in \R^3 :  & \left\Vert \pi_0^k(\boldsymbol{x}) \right\Vert < \delta,  \; \left\Vert g_0 \circ \pi_0^k(\boldsymbol{x}) \right\Vert < \delta \textrm { for all } k\in\N_0   \\
\textrm{ and } & \lim_{k \rightarrow \infty} \left\Vert \pi_0^k(\boldsymbol{x})\right\Vert = 0, \; \lim_{k \rightarrow \infty} \left\Vert g_0 \circ \pi_0^k(\boldsymbol{x})\right\Vert = 0 \Big\}.
\end{aligned}
\label{eq:basin-C0}
\end{equation}

In the new coordinates $\boldsymbol{\eta}$ \eqref{eq:log-coord} the origin in $\R^3$ becomes $-\boldsymbol{\infty}$. For asymptotically small $\boldsymbol{x}=\left(w,z_1,z_2\right)\in\B_{\delta}^{\pi_j}$ the requirement (as in~\eqref{eq:basin-C0}) that the iterates $\pi_j^k(\boldsymbol{x})$ approach $\boldsymbol{0}$ (hence the heteroclinic cycle) as $k\rightarrow\infty$
corresponds to all asymptotically large negative $\boldsymbol{\eta}$ such that
\begin{equation}
\lim_{k\rightarrow\infty} \left(M^{(j)}\right)^k\boldsymbol{\eta}=-\boldsymbol{\infty}.
\label{eq:U-inf}
\end{equation}

For $M=M^{(j)}$ we denote the set of points satisfying~\eqref{eq:U-inf} by $U^{-\infty}\left(M\right)$.
Lemma~3 in Podvigina~(2012), together with its reformulation as Lemma~3.2 in Garrido-da-Silva and Castro~(2019), provide necessary and sufficient conditions for $U^{-\infty}\left(M\right)$  having a positive measure. The conditions depend on the dominant eigenvalue and the associated eigenvector of $M$. We transcribe the result in its useful form in the following

\begin{lemma}[adapted from Lemma 3 in Podvigina (2012)]
Let $\lambda_{\max}$ be the maximum, in absolute value, eigenvalue of the matrix $M:\R^N \rightarrow \R^N$ and $\boldsymbol{w}^{\max}=\left(w_1^{\max},\ldots,w_N^{\max}\right)$ be an associated eigenvector. Suppose $\lambda_{\max} \neq 1$. The measure $\ell\left(U^{-\infty}(M)\right)$ is positive if and only if the three following conditions are satisfied:\label{lem:cond(i)-(iii)}
\begin{enumerate}
\item[(i)] $\lambda_{\max}$ is real;
\item[(ii)] $\lambda_{\max}>1$;
\item[(iii)] $w_{l}^{\max}w_{q}^{\max}>0$ for all $l,q=1,\ldots N$.
\end{enumerate}
\end{lemma}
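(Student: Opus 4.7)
The plan is to analyze the asymptotic behaviour of $M^k\boldsymbol{\eta}$ through the spectral decomposition of $M$, using the fact that the sign structure of $M^k\boldsymbol{\eta}$ is eventually governed by the dominant eigenpair. Assume first that $M$ is diagonalisable over $\C$ (the non-diagonalisable case is handled at the end via Jordan form). Let $\lambda_{\max}=\lambda_1,\lambda_2,\ldots,\lambda_N$ be the eigenvalues listed by decreasing absolute value, with associated eigenvectors $\boldsymbol{w}^{\max}=\boldsymbol{w}^{(1)},\boldsymbol{w}^{(2)},\ldots,\boldsymbol{w}^{(N)}$. Expand $\boldsymbol{\eta}=\sum_{j}c_{j}(\boldsymbol{\eta})\boldsymbol{w}^{(j)}$ and note that each $c_j\colon\R^N\to\C$ is linear. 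Then
\[
M^k\boldsymbol{\eta} \;=\; c_{\max}(\boldsymbol{\eta})\,\lambda_{\max}^{k}\,\boldsymbol{w}^{\max} \;+\; \sum_{j\geq 2} c_{j}(\boldsymbol{\eta})\,\lambda_{j}^{k}\,\boldsymbol{w}^{(j)},
\]
and for $c_{\max}(\boldsymbol{\eta})\neq 0$ the first term strictly dominates because $|\lambda_{j}/\lambda_{\max}|<1$ for $j\geq 2$. The set $\{\boldsymbol{\eta}\colon c_{\max}(\boldsymbol{\eta})=0\}$ is a proper linear subspace, hence of Lebesgue measure zero; this is the exceptional set I will discard throughout.

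For \emph{sufficiency}, assume (i)--(iii). Normalising so that $\boldsymbol{w}^{\max}$ has all positive entries (by (iii)), the open half-space $V=\{\boldsymbol{\eta}\colon c_{\max}(\boldsymbol{\eta})<0\}$ has infinite Lebesgue measure. By (i)--(ii), $\lambda_{\max}^{k}\to+\infty$ as a positive real sequence, so for every $\boldsymbol{\eta}\in V$ the leading term $c_{\max}(\boldsymbol{\eta})\lambda_{\max}^{k}\boldsymbol{w}^{\max}$ has each coordinate tending to $-\infty$. The remaining terms are of lower exponential order, so $M^k\boldsymbol{\eta}\to-\boldsymbol{\infty}$ coordinatewise. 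Hence $V\subset U^{-\infty}(M)$, which gives $\ell(U^{-\infty}(M))>0$.

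For \emph{necessity}, suppose $\ell(U^{-\infty}(M))>0$ and examine each condition in turn. If (i) fails then $\lambda_{\max}$ is non-real, so $\bar{\lambda}_{\max}$ is also an eigenvalue of the same modulus; the dominant contribution to the $i$th coordinate of $M^k\boldsymbol{\eta}$ takes the form $|\lambda_{\max}|^{k}A_{i}(\boldsymbol{\eta})\cos(k\theta+\phi_{i}(\boldsymbol{\eta}))$, which changes sign infinitely often, preventing convergence to $-\infty$ outside the codimension-two set where the $\lambda_{\max}$-component vanishes. If (ii) fails, then either $|\lambda_{\max}|<1$ and $M^k\boldsymbol{\eta}$ remains bounded, or $|\lambda_{\max}|>1$ with $\lambda_{\max}<-1$ and the dominant term alternates sign in every coordinate for all but a measure-zero set of $\boldsymbol{\eta}$. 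If (iii) fails there are indices $l,q$ with $w^{\max}_{l}w^{\max}_{q}<0$; then regardless of the sign of $c_{\max}(\boldsymbol{\eta})$, the $l$th and $q$th coordinates of the dominant term have opposite signs, so they cannot both tend to $-\infty$. In each failure mode, the set of $\boldsymbol{\eta}$ for which $M^k\boldsymbol{\eta}\to-\boldsymbol{\infty}$ is contained in a measure-zero subspace, contradicting positivity of $\ell(U^{-\infty}(M))$.

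I expect the main obstacle to be the careful bookkeeping for the non-diagonalisable case and for eigenvalues sharing the maximal modulus. Jordan blocks introduce polynomial prefactors $k^{m}$ in the asymptotic expansion; these grow sub-exponentially and do not affect leading signs, but one must verify that the ``half-space'' $V$ constructed in the sufficiency argument is still open and full-dimensional when the generalised eigenspace of $\lambda_{\max}$ has dimension greater than one, and that the dominant generalised eigenvector still inherits the sign pattern from $\boldsymbol{w}^{\max}$. Coincidences of modulus other than a complex-conjugate pair are excluded precisely by (i), which is why condition (i) together with $\lambda_{\max}\neq 1$ is enough to pin down a single real dominant direction. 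Once these technicalities are handled, the whole argument reduces to the sign analysis of the leading exponential term sketched above.
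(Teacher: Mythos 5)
The paper gives no proof of this lemma: it is stated as an adaptation of Lemma 3 in \cite{Podvigina2012} (via its reformulation in \cite{GdSC}) and simply transcribed, so there is no in-paper argument to compare yours against. Your spectral-decomposition proof is the standard argument behind the cited result, and its core is correct: under (i)--(iii) the open half-space $\left\{\boldsymbol{\eta}: c_{\max}\left(\boldsymbol{\eta}\right)<0\right\}$ is contained in $U^{-\infty}\left(M\right)$, which therefore has positive measure, and when any condition fails the dominant term obstructs coordinatewise divergence to $-\infty$ off a null set. You also, correctly, read condition (iii) as the sign condition $w_{l}^{\max}w_{q}^{\max}>0$; the printed $>1$ cannot be meant literally, since an eigenvector is only defined up to scale, and your normalisation of $\boldsymbol{w}^{\max}$ to a positive vector is the intended use.

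Two points need tightening. First, your parenthetical that condition (i) ``excludes coincidences of modulus other than a complex-conjugate pair'' is not right: (i) only asserts that $\lambda_{\max}$ is real and says nothing about other eigenvalues sharing its modulus (for instance $-\lambda_{\max}$ could also be an eigenvalue). The clean dichotomy you rely on --- a single real dominant direction versus an oscillating conjugate pair --- requires the standing assumption that the maximal modulus is attained by a unique eigenvalue (up to conjugation); this is a genericity hypothesis of the source, not a consequence of (i), and without it the claim that ``the remaining terms are of lower exponential order'' fails. Second, in the necessity step for (iii) you only treat $w_{l}^{\max}w_{q}^{\max}<0$; the condition also fails when some component $w_{l}^{\max}=0$, in which case the $l$th coordinate of the dominant term vanishes identically and that coordinate is governed by subdominant eigenvalues, so the opposite-signs argument does not apply and a separate short argument (or a nondegeneracy assumption) is needed. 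Neither issue changes your outline, but both are exactly the edge cases the cited proof has to dispose of, and for the matrices $M^{(j)}$ of this paper the uniqueness of the dominant eigenvalue is in fact verified explicitly in the proof of Theorem \ref{thm:stab-C0} rather than assumed.
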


\subsection{The function $F^{\textrm{index}}$}\label{sec:Findex}

The function $F^{\textrm{index}}:\R^N \rightarrow \R$ used to calculate the stability indices along heteroclinic connections is constructed in Garrido-da-Silva and Castro~(2019). 

For $N=3$ and any nonzero $\boldsymbol{\alpha}=\left(\alpha_1,\alpha_2,\alpha_3\right) \in \R^3$ denote $\alpha_{\min}=\min\left\{\alpha_1,\alpha_2,\alpha_3\right\}$ and $\alpha_{\max}=\max\left\{\alpha_1,\alpha_2,\alpha_3\right\}$. From Appendix~A.1 in Garrido-da-Silva and Castro~(2019) we have
\[
F^{\textrm{index}}(\boldsymbol{\alpha})=F^{+}(\boldsymbol{\alpha})-F^{-}(\boldsymbol{\alpha})
\]
with $F^{-}(\boldsymbol{\alpha})=F^{+}(-\boldsymbol{\alpha})$ where
\[
F^{+}(\boldsymbol{\alpha})=\begin{cases}
+\infty, & \textrm{if }\alpha_{\min} \geq0\\
0, & \textrm{if }\alpha_{1}+\alpha_{2}+\alpha_{3}\leq 0\\
-\dfrac{\alpha_{1}+\alpha_{2}+\alpha_{3}}{\alpha_{\min}}, & \textrm{if }\alpha_{\min} <0\textrm{ and }\alpha_{1}+\alpha_{2}+\alpha_{3}\geq 0,
\end{cases}
\]
and 
\[
F^{-}\left(\boldsymbol{\alpha}\right)=\begin{cases}
+\infty, & \textrm{if }\alpha_{\max} \leq0\\
0, & \textrm{if }\alpha_{1}+\alpha_{2}+\alpha_{3}\geq 0\\
-\dfrac{\alpha_{1}+\alpha_{2}+\alpha_{3}}{\alpha_{\max}}, & \textrm{if }\alpha_{\max} >0\textrm{ and }\alpha_{1}+\alpha_{2}+\alpha_{3}\leq 0.
\end{cases}
\]
It then follows that 
\[
F^{\textrm{index}}\left(\boldsymbol{\alpha}\right)=\begin{cases}
+\infty, & \textrm{if }\alpha_{\min} \geq0\\
-\infty, & \textrm{if }\alpha_{\max} \leq0\\
0, & \textrm{if }\alpha_{1}+\alpha_{2}+\alpha_{3}=0\\
\dfrac{\alpha_{1}+\alpha_{2}+\alpha_{3}}{\alpha_{\max}}, & \textrm{if }\alpha_{\max} >0\textrm{ and }\alpha_{1}+\alpha_{2}+\alpha_{3}<0\\[0.3cm]
-\dfrac{\alpha_{1}+\alpha_{2}+\alpha_{3}}{\alpha_{\min}}, & \textrm{if }\alpha_{\min} <0\textrm{ and }\alpha_{1}+\alpha_{2}+\alpha_{3}>0.
\end{cases}
\]

\end{document}